\title
{When does a biased graph come from a group labelling?}
\author{
   Matt DeVos\thanks{Department of Mathematics, Simon Fraser University, 8888 University Drive, Burnaby, BC, Canada. Email: mdevos@sfu.ca.
     Supported in part by an NSERC Discovery Grant (Canada).}
\and
   Daryl Funk\thanks{Department of Mathematics, Simon Fraser University, 8888 University Drive, Burnaby, BC, Canada. Email: dfunk@sfu.ca.  Supported in part by an NSERC Postgraduate Scholarship.}
\and
   Irene Pivotto\thanks{School of Mathematics and Statistics, University of Western Australia, 35 Stirling Highway, Crawley, WA, Australia. Email: irene.pivotto@uwa.edu.au. Supported by an Australian Research Council Discovery Project (project number DP110101596).}
}
\date{}
\DeclareMathOperator{\rank}{rank}
\begin{document}

\bibliographystyle{plain}
\maketitle
\setcounter{page}{1}
\newtheorem{theorem}{Theorem}[section]
\newtheorem{lemma}[theorem]{Lemma}
\newtheorem{corollary}[theorem]{Corollary}
\newtheorem{proposition}[theorem]{Proposition}
\newtheorem{definition}[theorem]{Definition}
\newtheorem{claim}{Claim}
\newtheorem{conjecture}[theorem]{Conjecture}
\newtheorem{observation}[theorem]{Observation}

\begin{abstract} 
A biased graph consists of a graph $G$ together with a collection of distinguished cycles of $G$, called balanced, with the property that no theta subgraph contains exactly two balanced cycles.  Perhaps the most natural biased graphs on $G$ arise from orienting $G$ and then labelling the edges of $G$ with elements of a group $\Gamma$.  In this case, we may define a biased graph by declaring a cycle to be balanced if the product of the labels on its edges is the identity, with the convention that we take the inverse value for an edge traversed backwards.  Our first result gives a natural topological characterisation of biased graphs arising from group-labellings.  

In the second part of this article, we use this theorem to construct some exceptional biased graphs.  Notably, we prove that for every $m \ge 3$ and $\ell$ there exists a minor-minimal not group-labellable biased graph on $m$ vertices where every pair of vertices is joined by at least $\ell$ edges.  
In particular, this shows that biased graphs are not well-quasi-ordered under minors. 
Finally, we show that these results extend to give infinite sets of excluded minors for certain natural families of frame and lift matroids, and to show that neither are these families well-quasi-ordered under minors.  
\end{abstract}

\noindent
\textbf{Keywords:} biased graphs, group-labelled graphs, gain graphs, frame matroids, lift matroids.

\noindent
\textbf{MSC:} 05C22, 05C25, 05B35.

\section{Introduction} 

%In a foundational series of papers \cite{MR676405, MR888686, MR1007712, MR1058551, MR1088626, MR1273951, MR1328292, MR2017726}, Zaslavski introduced the main objects of study of this paper.  Group-labelled graphs (or \emph{gain} graphs) are perhaps the most natural examples in the more general class of biased graphs.  These objects provide natural representations of large classes of matroids, and include graphic matroids, bicircular matroids, and Dowling geometries, and have turned out to play an important role in matroid structure theory (see, for example \cite{2013arXiv1312.5012G, MR2467829, MR2179649}).  

Throughout we shall assume that all graphs are finite, but may have loops and parallel edges.  A \emph{theta graph} consists of two distinct vertices $x,y$ and three internally disjoint paths from $x$ to $y$.  %Thus a theta graph has exactly three cycles.
A \emph{biased graph} consists of a pair $(G, \mathcal{B})$ where $G$ is a graph and $\mathcal{B}$ is a collection of cycles, called \emph{balanced}, obeying the \emph{theta property} - that is, there does not exist a theta subgraph of $G$ for which exactly two of the three cycles are balanced.  Cycles not in $\mathcal{B}$ are called \emph{unbalanced}.  We view ordinary graphs as a special case of biased graphs where every cycle is balanced.  

The theory of biased graphs was developed by Zaslavsky (see for example \cite{MR1007712, MR1088626, MR1273951, MR2017726}).  More recently, biased graphs have risen to prominence thanks to the central role they play in the Matroid Minors Project (see, for example \cite{2013arXiv1312.5012G, MR2467829, MR2179649}). 

%Though graphs are easy to represent, a biased graph may have exponentially many balanced cycles (in the size of the
%underlying graph) making it a considerably more complicated object.  For an arbitrary collection $\mathcal{B}$ of cycles, 
%deciding if $(G, \mathcal{B})$ is a biased graph requires $\Omega( | \mathcal{B} | )$ steps, which may then be exponential
%in the size of the graph.  Accordingly, it is convenient to have more compact ways to represent a biased graph.  

Perhaps the most natural families  of biased graphs arise from group-labelled graphs (also called gain graphs).  A \emph{group labelling} of a graph $G$ consists of an orientation of the edges of the graph together with a function $\phi : E(G) \rightarrow \Gamma$, where $\Gamma$ is a group (written multiplicatively).  Consider a walk $W$ in the 
underlying graph of $G$ with edge sequence $e_1, e_2, \ldots, e_{\ell}$ and define $\epsilon_i$ using the orientation of $G$ as follows
\[ \epsilon_i = \left\{ \begin{array}{cl}
				1	&	\mbox{if $e_i$ is traversed forward in $W$}	\\
				-1	&	\mbox{if $e_i$ is traversed backward in $W$}
				\end{array} \right. \]
Now we extend $\phi$ by defining 
\[ \phi(W) = \prod_{i=1}^{\ell} \phi(e_i)^{\epsilon_i}.\]
%Note that the \emph{reverse} of $W$, which we denote $W^{-1}$, satisfies $\phi( W^{-1} ) = \phi(W)^{-1}$.  Furthermore, if $W$ is a closed walk, and $W'$ is another closed walk with the same cyclic sequence of edges (i.e. $W'$ has sequence $e_j, e_{j+1}, \ldots, e_{\ell}, e_1, \ldots, e_{j-1}$), then $\phi(W)$ and $\phi(W')$ will be conjugate.  It follows from this that whenever a closed walk $W$ has $\phi(W) = 1$, the same will be true for every closed walk with the same edge sequence and for the reverse of every such walk.  

For a group labelling of $G$ with function $\phi$ we define $\mathcal{B}_{\phi}$ to be the set of all cycles $C$ of $G$ for which 
some (and thus every) simple closed walk $W$ around $C$ satisfies $\phi(W) = 1$.  
It is well known that  $(G, \mathcal{B}_{\phi})$ is a biased graph \cite{MR1007712}; such a graph is \emph{$\Gamma$-labelled}.  
%To verify this, 
%consider a theta subgraph $H \subseteq G$ consisting of three edge disjoint paths $P_1, P_2, P_3$ from $x$ to $y$.  If the cycle 
%$P_1 \cup P_2$ is balanced, then $\phi(P_1) \phi( P_2^{-1} ) = 1$ so $\phi(P_1) = \phi(P_2)$.  Similarly, if $P_2 \cup P_3$ is balanced
%then $\phi(P_2) = \phi(P_3)$.  These assumptions then imply $\phi(P_1) = \phi(P_3)$ so the third cycle $P_1 \cup P_3$ is also balanced.  

We say that a biased graph $(G, \mathcal{B})$ is $\Gamma$-\emph{labellable} if there is a group labelling of $G$ given by $\phi : E(G) \rightarrow \Gamma$ so that $(G, \mathcal{B}_{\phi}) = (G, \mathcal{B})$.  If $(G, \mathcal{B})$ is $\Gamma$-labellable for some group $\Gamma$ then we say it is \emph{group labellable}.  Our first result gives a topological criteria to determine if a biased graph is group labellable.  

\begin{theorem}
\label{main1}
Let $(G, \mathcal{B})$ be a biased graph and construct a 2-cell complex $K$ from $G$ by adding a disc with boundary $C$ for every $C \in \mathcal{B}$.  
Then the following are equivalent.
\begin{enumerate}
\item $(G, \mathcal{B})$ is group labellable.
%\marginpar{*} 
%\item $(G, \mathcal{B})$ is $\pi_1(K)$-labellable.
\item Every cycle $C \not\in \mathcal{B}$ is a non-contractible curve in $K$.
\end{enumerate}
\end{theorem}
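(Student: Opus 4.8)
The key object is the 2-cell complex $K$ obtained from $G$ by attaching a disc along each balanced cycle. The plan is to relate group labellability to the fundamental group of $K$. Think of $G$ as the 1-skeleton of $K$; picking a spanning tree $T$ of $G$ identifies $\pi_1(K)$ with the group generated by the non-tree edges (one generator $x_e$ per edge $e \in E(G) \setminus E(T)$) subject to one relator for each attached disc, namely the word spelled out by the boundary cycle $C \in \mathcal{B}$ when we contract $T$. Call this group $\pi = \pi_1(K)$. The natural candidate labelling is the \emph{canonical} one: orient $G$ arbitrarily, give each tree edge the identity, and give each non-tree edge $e$ the generator $x_e \in \pi$. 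By construction, a cycle $C$ satisfies $\phi(W) = 1$ in $\pi$ exactly when the corresponding word is trivial in $\pi$, i.e. exactly when $C$ is contractible in $K$.

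**The two directions.**

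For the direction $(2) \Rightarrow (1)$: assume every unbalanced cycle is non-contractible in $K$. Use the canonical labelling $\phi \colon E(G) \to \pi$ just described. Every $C \in \mathcal{B}$ bounds an attached disc, hence is contractible, so $\phi(W) = 1$ and $C \in \mathcal{B}_\phi$; conversely if $C \in \mathcal{B}_\phi$ then the corresponding word is trivial in $\pi$, i.e. $C$ is contractible in $K$, so by hypothesis $C \notin \mathcal{B}^c$, that is $C \in \mathcal{B}$. Hence $\mathcal{B}_\phi = \mathcal{B}$ and $(G,\mathcal{B})$ is $\pi$-labellable, so group labellable. For the direction $(1) \Rightarrow (2)$: suppose $(G,\mathcal{B}) = (G, \mathcal{B}_\psi)$ for some $\psi \colon E(G) \to \Gamma$. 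I want to show any contractible cycle $C$ of $K$ lies in $\mathcal{B}$. Since $C$ is contractible, the word it spells (after contracting $T$) is a product of conjugates of the relators coming from cycles in $\mathcal{B}$; equivalently, fixing a basepoint, the free homotopy class of $C$ is trivial, so $C$ — as a closed walk — is homotopic to a concatenation of conjugates of boundary circles of attached discs. The point is that $\psi$, read as a labelling, factors through $\pi_1$: the map $E(G)\setminus E(T) \to \Gamma$ sending $x_e \mapsto \psi(e)$ (with tree edges absorbed) respects all the relators precisely because each relator comes from a cycle in $\mathcal{B} = \mathcal{B}_\psi$, hence has trivial $\psi$-value. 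Therefore $\psi$ induces a homomorphism $\pi \to \Gamma$, and the $\psi$-value of $C$ is the image of its class in $\pi$, which is trivial since $C$ is contractible. Thus $\psi(W) = 1$ and $C \in \mathcal{B}_\psi = \mathcal{B}$.

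**The main obstacle.**

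The genuine content — and the step I expect to require the most care — is the bookkeeping that makes ``$\psi$ induces a homomorphism on $\pi_1(K)$'' precise and, dually, that ``contractible in $K$'' really is captured by ``the word is trivial in the one-relator-per-balanced-cycle presentation.'' This is essentially the van Kampen / cellular presentation of $\pi_1$ of a 2-complex, but one must be careful about basepoints versus free homotopy (cycles of $G$ are unbased loops, so ``contractible curve'' means the conjugacy class is trivial, which for triviality is the same thing), about orientations and the $\epsilon_i$ exponents matching the direction in which a boundary circle is traversed, and about the fact that attaching a disc along $C$ and along a reversal or a rotation of $C$ give the same complex and the same (up to conjugation/inversion) relator — consistent with the definition of $\mathcal{B}_\phi$ being orientation-independent. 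Once this dictionary between the topology of $K$ and the combinatorics of walks in $G$ is set up cleanly, both implications are short; the theorem is really the statement that $\pi_1(K)$ is the universal group through which every valid group labelling factors, and the canonical labelling into $\pi_1(K)$ itself is the universal example.
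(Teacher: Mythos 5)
Your proposal is correct, and while the forward direction coincides with the paper's argument, the converse is handled by a genuinely different (and arguably cleaner) route. The shared core is the presentation of $\pi_1(K)$ obtained by choosing a spanning tree $T$ and applying Van Kampen (the paper contracts $T$ to a point and works with the one-vertex complex $K'$, which is the same bookkeeping), together with the canonical labelling that puts the identity on tree edges and the generator on each non-tree edge; this gives $(2) \Rightarrow (1)$ exactly as in the paper's proof of $(3) \Rightarrow (2)$ in Theorem~\ref{main2}. For $(1) \Rightarrow (2)$, however, the paper does not argue directly: it routes through the combinatorial condition (4) on balanced reroutings, showing (1) implies (4) and that a contractible unbalanced cycle would produce a rerouting sequence violating (4). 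You instead show that any valid labelling $\psi$ factors through $\pi_1(K)$, so a contractible cycle has trivial $\psi$-value and must lie in $\mathcal{B}_\psi = \mathcal{B}$. This is precisely the universal property the authors only remark on after their proof (that their $\pi_1(K)$-labelling has the minimal set of balanced closed walks), so your argument is shorter for Theorem~\ref{main1} itself, at the cost of not producing the rerouting characterisation (statement (4) of Theorem~\ref{main2}) that the paper gets for free. One small point of care, which you partly flag yourself: the naive assignment $x_e \mapsto \psi(e)$ need not kill the relators, since a relator is the word of a balanced cycle with its tree edges deleted while $\psi$ may label tree edges nontrivially; you should either first switch (re-gauge) $\psi$ so that all tree edges carry the identity (switching preserves which closed walks evaluate to the identity), or send $x_e$ to $\psi(P_u)\psi(e)\psi(P_v)^{-1}$ with $P_u, P_v$ the tree paths from the basepoint, after which the relator for each $C \in \mathcal{B}$ maps to a conjugate of $\psi(W_C) = 1$ and the factorization goes through as you describe.
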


There is a natural notion of minor for biased graphs which extends the usual notion for graphs (which we define in Section~\ref{sec:construction}).
For every group $\Gamma$, let $\mathcal{G}_{\Gamma}$ denote the family of all biased graphs which can be $\Gamma$-labelled.  
It is not difficult to check that if $(G,\mathcal{B})$ is $\Gamma$-labellable, then so are its minors \cite{MR1007712}.  
Every $\mathcal{G}_{\Gamma}$  is therefore a proper minor closed class of biased graphs.  
It is natural therefore to ask about its set of excluded minors - \emph{i.e.}\ the minor minimal biased graphs which are not $\Gamma$-labellable.  
%Hence for any group $\Gamma$ the class of $\Gamma$-labellable graphs is a proper minor closed class of biased graphs.   We denote by $\mathcal{G}_{\Gamma}$ the class of all biased graphs which can be $\Gamma$-labelled.  
%It is natural therefore to ask about the set of minor minimal not $\Gamma$-labellable biased graphs - \emph{i.e.} its set of excluded minors.  
Using Theorem \ref{main1} we give a general construction for such biased graphs for infinite groups.  
Our next result is a consequence of this.

\begin{theorem}
\label{complete-construction}
For every $t \ge 3$ and $\ell$ there exists a biased graph $(G,\mathcal{B})$ with the following properties:
\begin{enumerate}
\item $G$ is a graph on $t$ vertices  and every pair of vertices is joined by at least $\ell$ edges.
\item $(G,\mathcal{B})$ is not group-labellable.
\item For every infinite group $\Gamma$, every proper minor of $(G,\mathcal{B})$ is $\Gamma$-labellable.
\end{enumerate}
\end{theorem}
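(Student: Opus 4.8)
The plan is to realise $(G,\mathcal B)$ topologically so that part (2) falls out of Theorem~\ref{main1}, and then to expend the real effort on minor-minimality. I would build $(G,\mathcal B)$ by folding up a triangulated polygon. Take a triangulation $\Delta$ of a $t$-gon with many triangles and with every interior vertex of even degree, and properly colour its vertices with $\{1,\dots,t\}$, assigning the $t$ boundary corners the $t$ distinct colours in cyclic order; the evenness of the interior degrees is precisely what makes such a colouring exist, and for $t=3$ any proper $3$-colouring will do since the three corners are then forced to carry the three colours. Let $G$ be the multigraph on vertex set $\{1,\dots,t\}$ obtained by identifying the vertices of $\Delta$ according to colour --- so each edge of $\Delta$ becomes an edge of $G$ joining two distinct vertices, with many parallels --- and let $\mathcal B$ consist of the cycles of $G$ that arise as images of the two-cells (triangles) of $\Delta$. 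One has to check that $(G,\mathcal B)$ is a biased graph: when $t=3$ this is clean, because every triangle of $\Delta$ meets all three colours, so if two triangle-images shared the middle path of a theta then that path would be a single edge whose two opposite vertices both receive the remaining colour and hence coincide, forcing the theta's third cycle to degenerate into digons; no theta then carries exactly two balanced cycles. For general $t$ one must build $\Delta$ and its colouring together so that each edge of $G$ still has a well-defined ``third colour'' (say by gluing recoloured $3$-coloured blocks along bichromatic paths) and so that every colour pair occurs on at least $\ell$ edges; this secures property (1).

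\textbf{Not group-labellable.} Let $K$ be the $2$-cell complex of Theorem~\ref{main1} and let $C^{*}$ be the image of the boundary cycle $\partial\Delta$; since the corners carry distinct colours, $C^{*}$ is a genuine $t$-cycle of $G$, and choosing $\Delta$ so that $C^{*}$ is not itself a triangle-image (automatic for $t\ge 4$, easily arranged for $t=3$) we have $C^{*}\notin\mathcal B$. The colouring is a continuous map $\Delta\to K$ that extends the loop $C^{*}$ across the disc $\Delta$, so $C^{*}$ is contractible in $K$. As $C^{*}$ is a non-balanced cycle, Theorem~\ref{main1} shows $(G,\mathcal B)$ is not group-labellable, which is (2).

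\textbf{Minor-minimality.} Fix an edge $e$ and an infinite group $\Gamma$; since $\mathcal G_\Gamma$ is minor closed it is enough to handle $(G,\mathcal B)\setminus e$ and $(G,\mathcal B)/e$, and I would do so in two stages. First, applying Theorem~\ref{main1} in the reverse direction, it suffices to show that every non-balanced cycle of the minor is non-contractible in the minor's $2$-cell complex. For this one arranges $\Delta$ to be a \emph{tight} filling of $C^{*}$: its image uses every edge of $G$, $C^{*}$ is the only non-balanced cycle of $G$ contractible in $K$, and no re-routing of the filling avoids any single edge. Then deleting an edge on $C^{*}$ destroys $C^{*}$ and creates no new contractible non-balanced cycle; deleting an edge off $C^{*}$ punctures $\Delta$ so that $C^{*}$ acquires nonzero winding number about the puncture; the contraction cases run the same way, using that contracting an edge of a balanced triangle collapses it to a digon and contracting an edge of $C^{*}$ merely shortens it. Second, to promote ``$\Gamma$-labellable for some $\Gamma$'' to ``for every infinite $\Gamma$'', the construction should further guarantee that for each proper minor the fundamental group of its $2$-cell complex is free and that every non-balanced cycle represents a primitive element of it --- morally a single free generator. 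Given such data one produces the labelling by abelianising and observing that any infinite group contains an element of infinite order or of prime order, hence receives a homomorphism from the relevant $\mathbb Z^{n}$ that is simultaneously non-trivial on all of the finitely many non-balanced cycles.

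\textbf{Main obstacle.} The hard part is exactly this minor-minimality step: one must choose $\Delta$ and its colouring carefully enough that the filling is genuinely tight for \emph{every} edge and for both deletions and contractions, and that the fundamental groups of \emph{all} proper minors come out free with the non-balanced cycles primitive. Getting the $t$-colouring to behave for all $t\ge 3$ while keeping every colour pair used at least $\ell$ times is a further, more routine, technical point.
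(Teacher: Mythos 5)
There are two genuine gaps, and the first is fatal to the construction as stated. For $t\ge 4$ your $(G,\mathcal B)$ is not a biased graph. If an interior edge $xy$ of $\Delta$ lies in triangles $xyz$ and $xyw$ with $c(z)\neq c(w)$, then in $G$ the two triangle-images share exactly the edge $c(x)c(y)$ and have distinct third vertices, so their union is a theta whose three cycles are the two (balanced) triangles and an (unbalanced) $4$-cycle --- exactly two balanced cycles, violating the theta property. You noticed this and proposed requiring a well-defined ``third colour'' for every edge, but that requirement is unsatisfiable for $t\ge 4$: if every interior edge's two triangles have equally coloured third vertices, then adjacent triangles of $\Delta$ carry the same colour-triple, and since the dual graph of a triangulated disc is connected, \emph{all} triangles carry one fixed triple, so only $3$ colours appear --- contradicting $t\ge 4$ distinct corner colours. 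Gluing $3$-coloured blocks along bichromatic paths does not escape this, because the glued edges become interior and the same propagation applies. This is exactly why the paper's construction (Theorem~\ref{general-construction} plus Lemma~\ref{coloured-planar}) takes every finite face to be a $t$-gon carrying all $t$ colours: the balanced cycles are then Hamiltonian in $\widetilde G$, and a theta containing two of them would force a parallel pair in the simple planar graph, so the theta property is immediate. Your topological argument for non-group-labellability (the disc boundary is an unbalanced contractible cycle, then Theorem~\ref{main1}) is the same as the paper's and is fine.

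The second gap is in minor-minimality. Showing that every unbalanced cycle of $(G,\mathcal B)\setminus e$ or $(G,\mathcal B)/e$ is non-contractible in its complex only gives labellability over \emph{some} group (namely $\pi_1$), whereas the theorem demands $\Gamma$-labellability for \emph{every} infinite $\Gamma$. Your proposed transfer --- arrange $\pi_1$ free with every unbalanced cycle primitive, then abelianise and hit $\Gamma$ through an element of infinite or prime order --- is both unsubstantiated (nothing in the ``tight filling'' conditions controls the exponentially many unbalanced cycles, e.g.\ those coming from unions of disjoint paths of $G$, and their primitivity is never arranged) and insufficient in principle: take $\Gamma=\bigoplus\mathbb Z/2$. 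Every homomorphism into $\Gamma$ factors through the mod-$2$ homology of the complex, and you would need a single functional on $\mathbb F_2^{\,n}$ that is nonzero on the mod-$2$ classes of \emph{all} unbalanced cycles simultaneously; once these classes cover enough of $\mathbb F_2^{\,n}$ (which nothing prevents, given how many unbalanced cycles these dense multigraphs have), no such functional exists, and primitivity does not help since it only guarantees each class is individually nonzero. The paper avoids this entirely by writing down explicit labellings of the deletion and contraction: one greedily picks $g_0,g_1,\dots$ in $\Gamma$ so that $g_k$ is not a word of length $\le 3t$ in the earlier elements (possible in any infinite group), labels edges by elements of the form $g_i^{-1}g_j$ or $g_i^{-1}g_0^{\pm1}g_j$, and then every unbalanced cycle evaluates to a short word containing its highest-index generator exactly once, hence is nontrivial --- an argument that is uniform over all infinite groups, including torsion groups of bounded exponent, and handles all unbalanced cycles at once. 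To complete your proof you would need either such an explicit labelling scheme or a genuinely different uniform argument; the topological criterion alone cannot deliver statement (3).
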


The famous graph minors theorem of Robertson and Seymour says that every proper minor closed class of graphs is characterised by a finite list of excluded minors.  
%For every group $\Gamma$, let $\mathcal{G}_{\Gamma}$ denote the family of all biased graphs which can be $\Gamma$-labelled.   
%Every $\mathcal{G}_{\Gamma}$  is a minor closed class of biased graphs, so it is natural to ask about its set of excluded minors (\emph{i.e.}\ the minor minimal biased graphs which are not $\Gamma$-labellable).  
Theorem \ref{complete-construction} shows that for every infinite group $\Gamma$ the class $\mathcal{G}_\Gamma$ has a rich set of excluded minors.  In particular, we have the following obvious consequence.

\begin{corollary} \label{corollary:inf_ex_min_for_G_Gamma}
For every infinite group $\Gamma$ and every $t \ge 3$ there are infinitely many excluded minors for $\mathcal{G}_{\Gamma}$ with exactly $t$ vertices.
\end{corollary}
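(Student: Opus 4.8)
The plan is to simply iterate Theorem~\ref{complete-construction}. Fix an infinite group $\Gamma$ and an integer $t \ge 3$. For each positive integer $\ell$, Theorem~\ref{complete-construction} supplies a biased graph $(G_\ell, \mathcal{B}_\ell)$ with exactly $t$ vertices, in which every pair of vertices is joined by at least $\ell$ edges, which is not group-labellable, and all of whose proper minors are $\Gamma$-labellable.

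First I would observe that each $(G_\ell,\mathcal{B}_\ell)$ is an excluded minor for $\mathcal{G}_\Gamma$. Indeed, since $(G_\ell,\mathcal{B}_\ell)$ is not group-labellable it is in particular not $\Gamma$-labellable, so $(G_\ell,\mathcal{B}_\ell) \notin \mathcal{G}_\Gamma$; and by property~(3) of Theorem~\ref{complete-construction} every proper minor of $(G_\ell,\mathcal{B}_\ell)$ lies in $\mathcal{G}_\Gamma$. Since $\mathcal{G}_\Gamma$ is minor-closed, this says precisely that $(G_\ell,\mathcal{B}_\ell)$ is minor-minimal among biased graphs not in $\mathcal{G}_\Gamma$, i.e.\ an excluded minor.

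Next I would check that the family $\{(G_\ell,\mathcal{B}_\ell) : \ell \ge 1\}$ contains infinitely many pairwise non-isomorphic members. Writing $m_\ell := |E(G_\ell)|$, property~(1) gives $m_\ell \ge \binom{t}{2}\ell$, so $m_\ell \to \infty$. Hence $(m_\ell)_{\ell \ge 1}$ attains infinitely many distinct values, and biased graphs with different numbers of edges cannot be isomorphic; choosing one $(G_\ell,\mathcal{B}_\ell)$ for each value then yields infinitely many pairwise non-isomorphic excluded minors for $\mathcal{G}_{\Gamma}$, each with exactly $t$ vertices, as required. There is no real obstacle here beyond Theorem~\ref{complete-construction} itself; the only point needing (routine) care is this last bookkeeping step, and counting edges dispatches it.
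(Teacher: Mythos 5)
Your argument is correct and is exactly the intended (and essentially only) route: the paper presents this corollary as an immediate consequence of Theorem~\ref{complete-construction}, with the same two observations — minor-minimality making each $(G_\ell,\mathcal{B}_\ell)$ an excluded minor for $\mathcal{G}_\Gamma$, and growing edge counts forcing infinitely many non-isomorphic examples on $t$ vertices. Nothing further is needed.
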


For both graphs and biased graphs, there are natural partial orders defined by the rule that a graph $G$ (biased graph $(G, \mathcal{B})$) dominates another graph $H$ (biased graph $(H, \mathcal{C})$) if and only if $H$  ($(H,\mathcal{C})$) is isomorphic to a minor of $G$ ($(G, \mathcal{B})$).  
An equivalent statement of Robertson and Seymour's graph minors theorem is that 
%A famous theorem due to Robertson and Seymour asserts that 
for graphs, this partial order has no infinite antichain.  %(equivalently, every proper minor closed class of graphs is characterized by a finite list of excluded minors).  
In contrast, the above result shows that the partial order for biased graphs has infinite antichains, even with each member on a fixed number of vertices.  

In fact, there are some very easily described infinite antichains of biased graphs.  For instance, let $2C_n$ denote the graph obtained from a cycle of length $n$ by adding an edge in parallel with every existing edge.  Let $\mathcal{B}_n$ consist of two edge disjoint cycles of length $n$ in the graph $2C_n$.  
Then each $(2C_n, \mathcal{B}_n)$ is a biased graph.   
\begin{observation} \label{obs:2C_n_antichain} 
The set $\{ ( 2C_n, \mathcal{B}_n ) \mid n \ge 3 \}$ is an infinite antichain.  
\end{observation}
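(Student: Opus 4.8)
The plan is to show two things: first, that each $(2C_n, \mathcal{B}_n)$ is a genuine biased graph (i.e.\ that $\mathcal{B}_n$ obeys the theta property), and second, that for $m \neq n$ the biased graph $(2C_m, \mathcal{B}_m)$ is \emph{not} isomorphic to a minor of $(2C_n, \mathcal{B}_n)$, and vice versa. Since the antichain condition is symmetric, it suffices to rule out $(2C_m, \mathcal{B}_m) \preceq (2C_n, \mathcal{B}_n)$ whenever $m \neq n$; I would split this into the cases $m > n$ and $m < n$.

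For the theta property: the cycles of $2C_n$ are exactly the two ``long'' cycles of length $n$ together with all the digons (pairs of parallel edges). A theta subgraph of $2C_n$ must use two of the parallel classes as its two ``short'' $xy$-paths forming a digon, which forces the third path to run the long way around; concretely, every theta subgraph of $2C_n$ consists of one digon and the two long cycles obtained by attaching either edge of that digon to the complementary path. In such a theta the two long cycles are the two members of $\mathcal{B}_n$ (both balanced) and the digon is unbalanced — so we never see exactly two balanced cycles. Hence $(2C_n, \mathcal{B}_n)$ is a biased graph. (One should check $\mathcal{B}_n$ is well-defined as stated; I would fix it to be the two specific long cycles so there is no ambiguity.)

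The core of the argument is the non-minor claim. The cleanest route is to identify a minor-monotone invariant. I would use the pair (number of vertices, number of unbalanced digons), or more robustly argue directly. Observe that contracting an edge of a long cycle in $2C_n$ either produces a smaller graph of the same shape or destroys the structure, while deleting edges can only reduce the number of parallel classes; and a balanced biased graph $(2C_m,\mathcal{B}_m)$ has exactly $m$ parallel classes, each a digon, with \emph{every} digon unbalanced and exactly two (non-digon) balanced cycles that are edge-disjoint and together use every parallel class exactly once. If $(2C_m, \mathcal{B}_m)$ is a minor of $(2C_n, \mathcal{B}_n)$ then since $2C_m$ has $3$-regular underlying simple structure we cannot have deleted or contracted any edge of $2C_n$ without leaving something that is no longer of the form $2C_{\cdot}$ with the required balanced-cycle structure — here the key point is that $2C_n$ has exactly two cycles not contained in a parallel class, and any proper minor of $(2C_n,\mathcal B_n)$ that is still a ``doubled cycle'' must be $(2C_{n'},\mathcal B_{n'})$ with $n' < n$. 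So $m = n$. The main obstacle will be organising this case analysis on minor operations carefully — in particular handling contractions, which in a biased graph can turn an unbalanced digon into a balanced one or into a loop, and verifying that no sequence of such operations can increase $m$; I expect this to require a short lemma to the effect that if $(2C_{n'},\mathcal B_{n'})$ is obtained from $(2C_n,\mathcal B_n)$ by a single deletion or contraction then $n' \le n$, after which induction finishes the proof.
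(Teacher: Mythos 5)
There is a genuine gap, and it lies in the core of your argument: the minor-exclusion step. Your closing plan is to prove a lemma saying that if $(2C_{n'},\mathcal{B}_{n'})$ is obtained from $(2C_n,\mathcal{B}_n)$ by a single deletion or contraction then $n' \le n$, and to finish by induction. That lemma is logically the wrong direction: $n' \le n$ is automatic (a minor never has more edges), and it does nothing to prevent $(2C_m,\mathcal{B}_m)$ with $m<n$ from appearing as a minor of $(2C_n,\mathcal{B}_n)$ --- which is precisely what must be excluded for the set to be an antichain. If some single operation really did produce a biased graph of the form $(2C_{n'},\mathcal{B}_{n'})$ with $n'<n$, the observation would be false. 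What is needed, and what the paper uses, is a minor-monotone invariant that collapses immediately: the number of balanced cycles. Each $(2C_n,\mathcal{B}_n)$ has exactly two balanced cycles, and every edge lies in exactly one of them; deleting an edge destroys the balanced cycle through it, while contracting an edge turns the \emph{other} balanced cycle into a pair of cycles meeting in a vertex (it passes through both endpoints of the contracted edge), so it is no longer a cycle. Hence every proper minor has fewer than two balanced cycles, and since neither deletion nor contraction can increase the number of balanced cycles, no proper minor can ever be some $(2C_m,\mathcal{B}_m)$, which has exactly two. You gesture at the relevant structure (``exactly two balanced cycles that are edge-disjoint''), but you never turn it into this monotonicity argument, and the case analysis on ``doubled cycle'' shapes you propose instead is both harder and, as organised, inconclusive.

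A secondary error: your verification of the theta property is internally inconsistent. In a theta subgraph of $2C_n$ (a digon plus a long path), the two long cycles share that path, so they cannot both belong to $\mathcal{B}_n$, whose two members are edge-disjoint; your claim that both long cycles are balanced is false, and if it were true it would exhibit a theta with exactly two balanced cycles, i.e.\ a \emph{violation} of the theta property rather than a confirmation of it. The correct statement is that each theta of $2C_n$ contains at most one balanced cycle (the digon is unbalanced and at most one of the two long cycles can lie in $\mathcal{B}_n$), which is what makes $(2C_n,\mathcal{B}_n)$ a biased graph.
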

To see this, note that each of these biased graphs has exactly two balanced cycles, but contracting or deleting an edge gives a biased graph with fewer than two balanced cycles, and this will remain true under further deletions and contractions.  In Section \ref{sec:infinite_antichains} we show that for every infinite group $\Gamma$, all of these biased graphs are contained in $\mathcal{G}_{\Gamma}$.  Even more pathologically, compared to the situation for graphs, is the following result showing that $\mathcal{G}_{\Gamma}$ may contain infinite antichains all of whose members are on a fixed number of vertices.

\begin{theorem}
\label{inf-antichain-gamma}
%Let $\Gamma$ be a group and let $t \ge 3$.  There exists an infinite antichain of biased graphs in $\mathcal{G}_{\Gamma}$ all on $t$ vertices if and only if $\Gamma$ is infinite.
Let $\Gamma$ be a group and fix $t \ge 3$.  There exists an infinite antichain of $\Gamma$-labelled graphs on $t$ vertices if and only if $\Gamma$ is infinite.
\end{theorem}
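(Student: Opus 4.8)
The plan is to prove the two directions of the equivalence separately. A finite group forces a well-quasi-order, and hence rules out any infinite antichain, by an elementary Dickson's-Lemma argument; an infinite group admits an infinite antichain by an explicit construction, and this is where essentially all the work lies. For the construction it suffices to treat $t=3$: if $\{G_n\}$ is an infinite antichain of biased graphs on three vertices, none of which has an isolated vertex, then adding $t-3$ isolated vertices to each $G_n$ gives an infinite antichain on $t$ vertices, because any minor of $G_n$ together with $t-3$ isolated vertices determines, after the isolated vertices are stripped off, a minor of $G_n$.

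Suppose $\Gamma$ is finite. Fix bijections identifying the vertex set of each $\Gamma$-labelled graph on exactly $t$ vertices with $\{1,\dots,t\}$, and encode the graph by the tuple $D$ recording, for every ordered pair $(u,v)$ of distinct vertices and every $\gamma\in\Gamma$, the number of edges between $u$ and $v$ carrying label $\gamma$ in the orientation $u\to v$, and, for every vertex $v$ and every $\gamma$, the number of loops at $v$ with label $\gamma$. Since $\Gamma$ is finite, $D$ lies in a fixed $\mathbb N^N$, so by Dickson's Lemma any infinite sequence of such graphs contains two, with encodings $D_i$ and $D_j$ where $i<j$, satisfying $D_i\le D_j$ coordinatewise. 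Deleting the surplus edges of every type from the $j$-th graph yields a $\Gamma$-labelled graph isomorphic to the $i$-th, hence a biased graph isomorphic to the $i$-th; so the $i$-th biased graph is a minor of the $j$-th, and the sequence is not an antichain.

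Now suppose $\Gamma$ is infinite, and first assume it has an element of infinite order, so that $\mathbb Z\le\Gamma$ and it suffices to produce $\mathbb Z$-labelled graphs. For $n\ge 4$ let $G_n$ have vertices $P,Q,R$, with $n$ parallel edges oriented $P\to Q$ labelled $1,2,\dots,n$, with $n$ parallel edges oriented $Q\to R$ labelled $-1,-2,\dots,-n$, and with three parallel edges oriented $R\to P$ labelled $0$, $1$, and $1-n$. The labels within each class are distinct, so no digon is balanced. A triangle through the $P\to Q$ edge labelled $i$, the $Q\to R$ edge labelled $-j$, and the $R\to P$ edge labelled $z$ has product $i-j+z$, hence is balanced exactly when $z=j-i$; the balanced triangles are therefore those with $j-i=0$, those with $j-i=1$ and $j\le n$, and the single one with $(i,j)=(n,1)$ using $z=1-n$. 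Recording, for each balanced triangle, which $P\to Q$ edge and which $Q\to R$ edge it uses, one finds that these incidences form a single cycle of length $2n$, with the edge labelled $1-n$ supplying the closing link and the three $R\to P$ edges appearing as \emph{colours} on its $2n$ edges. Each $G_n$ is thus a genuine $\mathbb Z$-labelled graph on three non-isolated vertices, with $2n+3$ edges, so the $G_n$ are pairwise non-isomorphic.

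It remains to check that $\{G_n : n\ge 4\}$ is an antichain. As $G_n$ has no loops, a minor of $G_n$ on three vertices is obtained by deleting edges only, so $G_m$ cannot be such a minor of $G_n$ when $m>n$, on a count of edges. For $m<n$, since $m\ge 4$ the three parallel classes of $G_m$, of sizes $m,m,3$, must map under any isomorphism onto a minor of $G_n$ to subsets of three distinct parallel classes of $G_n$, of sizes $n,n,3$; this forces the size-$3$ class of $G_m$ into that of $G_n$ and each size-$m$ class into a size-$n$ class, so the minor keeps all three $R\to P$ edges and exactly $m$ edges of each other class. Its surviving balanced triangles then correspond to the subgraph of the $2n$-cycle induced on the $2m$ surviving edges of those two classes, which is a disjoint union of paths and hence not the $2m$-cycle displayed by $G_m$. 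So $G_m$ is not a minor of $G_n$ for any $m\ne n$. The main obstacle that remains is the case of an infinite group of bounded exponent, where no element has large order: one must replace the cyclic pattern above by some other family of balanced-triangle configurations, realisable in an arbitrary infinite group, whose members are mutually ``incompressible''; and, for whatever family is chosen, the delicate step is to confirm that the group labels produce precisely the intended balanced cycles and no spurious ones — exactly the situation Theorem~\ref{main1} is designed to control.
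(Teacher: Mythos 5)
Your finite-group direction is fine and is essentially the paper's argument (iterated extraction of non-decreasing subsequences of edge-multiplicity vectors, i.e.\ Dickson's Lemma), and your $\mathbb Z$-labelled family on three vertices, with its parallel-class and balanced-triangle-incidence analysis, is a correct antichain when $\Gamma$ contains an element of infinite order. But the theorem asserts the antichain for \emph{every} infinite group, and your construction genuinely needs the integer labels $1,2,\dots,n$ with their additive relations: for an infinite torsion group, even one of bounded exponent such as $\bigoplus_{i\ge 1}\mathbb Z/2\mathbb Z$, there is no element of large order and your cyclic pattern of balanced triangles has no analogue. You acknowledge this yourself as ``the main obstacle that remains,'' so as it stands the proof covers only a subcase of the ``if'' direction; this is a genuine gap, not a routine extension.

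The idea you are missing is that one does not need an element of infinite order, only finitely many group elements with no \emph{short} relations among them, and these exist in any infinite group by a greedy choice: at each step only finitely many bounded-length words in the previously chosen elements must be avoided, while $\Gamma$ is infinite. The paper (Lemma~\ref{inf-anti-bias}) picks such elements $g_1,\dots,g_n$ and labels the graphs coming from the coloured planar graphs of Lemma~\ref{coloured-planar} by ``potential differences'' $\phi(e)=g_i^{-1}g_j$ for an edge joining the classes of $v_i$ and $v_j$; the absence of short relations forces exactly the face boundaries to be balanced, and the antichain property follows from a counting argument (every edge lies in exactly two balanced cycles, which fails after any deletion) much in the spirit of your incidence-cycle argument. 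To complete your proof you would need either this greedy no-short-relations device or some other family of labellings realisable in an arbitrary infinite group, and then a verification that no spurious balanced cycles arise; without that, the bounded-exponent case remains open. (A minor further remark: your reduction to $t=3$ by adding isolated vertices is technically consistent with the paper's definition of minors, but it is unnecessary once the general construction is in place, since the paper produces antichains on $t$ vertices directly.)
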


For each biased graph $(G, \mathcal{B})$, there are two matroids naturally associated with $(G, \mathcal{B})$, on ground set $E(G)$, the \emph{lift} matroid $L(G,\mathcal{B})$ and \emph{frame} matroid $F(G,\mathcal{B})$.  
These were defined by Zaslavsky in \cite{MR1088626}.  They may be defined in terms of circuits as follows.  
A set $C \subseteq E(G)$ is a circuit of the lift matroid $L(G, \mathcal{B})$ if $C$ is 
%\begin{itemize} 
  balanced, 
  the union of two unbalanced cycles meeting in at most one vertex, or 
  a theta subgraph containing no balanced cycle.  
%\end{itemize}
A set $C \subseteq E(G)$ is a circuit of the frame matroid $F(G, \mathcal{B})$ if $C$ is 
%\begin{itemize} 
  balanced, 
  the union of two unbalanced cycles meeting in at most one vertex together with a path connecting them if these cycles are disjoint, or 
  a theta subgraph containing no balanced cycle.  
%\end{itemize}
Minor operations on $(G,\mathcal{B})$ are consistent with their corresponding matroid minor operations on $L(G,\mathcal{B})$ and $F(G,\mathcal{B})$, and each of the classes of lift and frame matroids are closed under minors  \cite{MR1088626}.  

%\marginpar{\scriptsize *not sure if these feels like the right place for this stuff about spikes and swirls?}
Spikes and swirls are two families of matroids that have been an important source of examples in studies of representability of matroids over fields.  
For each integer $n \geq 3$, a rank $n$ \emph{spike} is obtained by taking $n$ concurrent three-point lines $\{x_i, y_i, z\}$ ($i \in \{1, \ldots, n\}$) freely in $n$-space, then deleting their common point of intersection $z$.  
%If no choice of $n$ points, one from each pair $\{x_i, y_i\}$, form a circuit-hyperplane, then this is the rank $n$ \emph{free} spike; other spikes have such circuit-hyperplanes.  
%The rank $n$ \emph{whirl} is obtained from the cycle matroid $M(W_n)$ of the rank $n$ wheel
A rank $n$ \emph{swirl} is obtained by adding a point freely to each 3-point line of the rank $n$ whirl, then deleting those points lying on the intersection of two 3-point lines.  
%If no $n$ points form a circuit-hyperplane, this is the rank $n$ \emph{free} swirl; other swirls have such circuit hyperplanes (which necessarily have exactly one point from each of the original $n$ lines used to construct the swirl).  
Zaslavsky \cite{MR2017726} observed that spikes are lift matroids and swirls are frame matroids both coming from biased graphs of the form $(2C_n, \mathcal{B})$ where every cycle in $\mathcal{B}$ is of length $n$.  
%In particular, the free spike of rank $n$ is $L(2C_n, \emptyset)$ and the free swirl of rank $n$ is $F(2C_n, \emptyset)$.  
The family of biased graphs $(2C_n, \mathcal{B}_n)$ defined above yields both an infinite antichain of spikes and swirls, since in both cases these matroids have exactly two circuit hyperplanes which partition the ground set, but the same is not true of any proper minor.

%( \marginpar{*} 
%If a spike or swirl of rank $\geq 4$ is not group labellable, then it is not representable \cite{MR2017726}.    
%Spikes and swirls are 3-connected and have branch-width 3.  
%)

For every group $\Gamma$, we let $\mathcal{F}_{\Gamma}$ (resp.\ $\mathcal{L}_{\Gamma}$) denote the class of matroids which can be represented as a frame (lift) matroid of a biased graph which is $\Gamma$-labellable.  Each of these is a proper minor closed class of matroids.  In general, a matroid in either of these classes may have many different representations as biased graphs, which complicates the problem of determining excluded minors.  Fortunately, our constructions have essentially unique representations, and this permits us to achieve the following somewhat surprising result (which we prove in Section \ref{sec:Mat_Ex_Minors}).  

\begin{theorem}
\label{matroid-minors}
For every infinite group $\Gamma$ and every $t \ge 3$ the classes $\mathcal{L}_{\Gamma}$ and $\mathcal{F}_{\Gamma}$ have infinitely many excluded minors of rank $t$.  
\end{theorem}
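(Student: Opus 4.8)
The plan is to deduce Theorem~\ref{matroid-minors} from Theorem~\ref{complete-construction} by passing from biased graphs to their frame and lift matroids. Fix an infinite group $\Gamma$ and an integer $t \ge 3$. For each integer $\ell \ge 1$, Theorem~\ref{complete-construction} provides a biased graph $(G_\ell, \mathcal{B}_\ell)$ on $t$ vertices in which every pair of vertices is joined by at least $\ell$ edges, which is not group-labellable, but all of whose proper minors are $\Gamma$-labellable. Since a balanced biased graph is group-labellable (label every edge with the identity of the trivial group), $(G_\ell, \mathcal{B}_\ell)$ contains an unbalanced cycle; it is also connected, so by the standard rank formulas for frame and lift matroids both $F(G_\ell, \mathcal{B}_\ell)$ and $L(G_\ell, \mathcal{B}_\ell)$ have rank exactly $t$. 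Moreover $|E(G_\ell)| \ge \ell\binom{t}{2}$, so the matroids in $\{F(G_\ell, \mathcal{B}_\ell) : \ell \ge 1\}$, and likewise in $\{L(G_\ell, \mathcal{B}_\ell) : \ell \ge 1\}$, realise infinitely many isomorphism types. It therefore suffices to prove that, for every sufficiently large $\ell$, the matroid $F(G_\ell, \mathcal{B}_\ell)$ is an excluded minor for $\mathcal{F}_\Gamma$ and $L(G_\ell, \mathcal{B}_\ell)$ is an excluded minor for $\mathcal{L}_\Gamma$.

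Minor-minimality is the routine half. Let $M = F(G_\ell, \mathcal{B}_\ell)$ and let $N$ be a proper minor of $M$; then $N$ is a minor of $M \setminus e$ or of $M / e$ for some $e \in E(G_\ell)$. Since biased-graph minor operations agree with the corresponding matroid minor operations on the frame matroid, $M \setminus e = F((G_\ell, \mathcal{B}_\ell) \setminus e)$ and $M / e = F((G_\ell, \mathcal{B}_\ell) / e)$; both $(G_\ell, \mathcal{B}_\ell) \setminus e$ and $(G_\ell, \mathcal{B}_\ell) / e$ are proper minors of $(G_\ell, \mathcal{B}_\ell)$, hence $\Gamma$-labellable by Theorem~\ref{complete-construction}, so $M \setminus e, M / e \in \mathcal{F}_\Gamma$. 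As $\mathcal{F}_\Gamma$ is minor-closed, $N \in \mathcal{F}_\Gamma$. The identical argument with $L$ in place of $F$ shows that every proper minor of $L(G_\ell, \mathcal{B}_\ell)$ lies in $\mathcal{L}_\Gamma$.

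It remains to show $F(G_\ell, \mathcal{B}_\ell) \notin \mathcal{F}_\Gamma$ and $L(G_\ell, \mathcal{B}_\ell) \notin \mathcal{L}_\Gamma$, and this is where the real work lies. The difficulty is that a frame (or lift) matroid may arise from several non-isomorphic biased graphs, so the failure of $(G_\ell, \mathcal{B}_\ell)$ to be group-labellable does not by itself preclude a $\Gamma$-labellable representation of its matroids. The plan is to prove that, for $\ell$ large enough, $(G_\ell, \mathcal{B}_\ell)$ is, up to the natural equivalence of biased-graph representations, the only biased graph whose frame matroid is $F(G_\ell, \mathcal{B}_\ell)$, and similarly the only one whose lift matroid is $L(G_\ell, \mathcal{B}_\ell)$. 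Granting this, any $\Gamma$-labelled biased graph representing one of these matroids is equivalent to $(G_\ell, \mathcal{B}_\ell)$, which forces $(G_\ell, \mathcal{B}_\ell)$ itself to be group-labellable, a contradiction; hence these matroids are not in $\mathcal{F}_\Gamma$, $\mathcal{L}_\Gamma$ respectively. To establish the uniqueness we exploit the density of $G_\ell$: having at least $\ell$ edges between every pair of vertices makes $F(G_\ell, \mathcal{B}_\ell)$ and $L(G_\ell, \mathcal{B}_\ell)$ highly (vertically) connected, which lets us invoke the structure theory of biased-graph representations of sufficiently connected frame and lift matroids to recover the vertex set and the edge set of $G_\ell$ from the matroid, after which the balanced cycles are determined by the circuits of the matroid.

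The step I expect to be the genuine obstacle is this uniqueness claim: ruling out every essentially different representation, including rolled-up or Whitney-twisted variants and exceptional low-connectivity configurations, and checking that the relevant equivalence preserves group-labellability. This is precisely where the strong hypotheses of Theorem~\ref{complete-construction}, and in particular the freedom to take arbitrarily many parallel edges between each pair of vertices while keeping the number of vertices fixed, are essential.
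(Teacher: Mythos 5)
Your overall architecture matches the paper's: take the biased graphs $(G_\ell,\mathcal{B}_\ell)$ from Theorem~\ref{complete-construction}, note that biased-graph minors agree with matroid minors (legitimate here since the graphs are loopless, so no unbalanced-loop contractions arise), so all proper minors of $F(G_\ell,\mathcal{B}_\ell)$ and $L(G_\ell,\mathcal{B}_\ell)$ lie in $\mathcal{F}_\Gamma$, $\mathcal{L}_\Gamma$, and reduce the whole problem to showing the matroids themselves are not in these classes, i.e.\ to ruling out other biased-graph representations. But that last step is exactly where your proposal stops being a proof: you assert that for large $\ell$ the representation is unique ``up to the natural equivalence,'' and you propose to get this from high vertical connectivity plus ``the structure theory of biased-graph representations of sufficiently connected frame and lift matroids,'' while yourself flagging rolled-up and twisted representations as an unresolved obstacle. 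No such off-the-shelf theorem is invoked precisely, it is not checked that these rank-$t$ matroids meet the hypotheses of any such theorem (their rank, hence vertical connectivity, is bounded by $t$ no matter how large $\ell$ is, so ``taking $\ell$ large'' does not buy more connectivity), and the lift-matroid side of such a theory is not standard. So the crucial non-membership claim is left unproven.

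The paper fills this gap with a short, self-contained argument (Lemma~\ref{lift-frame-unique}) that you could have reproduced from the hypotheses you already have: since every pair of vertices of $G_\ell$ is joined by at least four edges and every $2$-cycle is unbalanced (all balanced cycles are Hamiltonian of length $t\ge 3$), declare $e\sim f$ when $e,f$ lie in a common $U_{2,4}$-restriction; this is an equivalence relation whose classes are exactly the parallel classes of $G_\ell$. In any other biased graph $(G',\mathcal{B}')$ with the same frame (or lift) matroid, each class induces a two-vertex subgraph ($K_2^m$, $K_2^{(m-1)+}$, or, in the frame case only, $K_2^{(m-2)++}$), one rules out loops in $G'$, shows triangles of $G_\ell$ remain triangles in $G'$, and a line-graph argument then forces $G'$ to be $G_\ell$ up to renaming vertices, with $\mathcal{B}'=\mathcal{B}_\ell$ determined by the circuits. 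Uniqueness in this exact sense makes your final contradiction immediate, since renaming vertices obviously preserves $\Gamma$-labellability, whereas your looser ``natural equivalence'' would additionally require the labellability-preservation check you acknowledge but do not perform. In short: right strategy, but the decisive uniqueness lemma is missing, and the route you sketch for it is not one that is known to work at fixed rank.
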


In addition, we prove that for every infinite group $\Gamma$ and every $t \ge 3$ there exist infinite antichains of rank $t$ matroids in both $\mathcal{L}_{\Gamma}$ and $\mathcal{F}_{\Gamma}$.

%\paragraph{A note about minors of biased graphs and matroids.} 
%%In many contexts it is desirable to also define an operation of contracting an unbalanced loop; 
%The minor operations defined above for a biased graph $(G,\mathcal{B})$ agree with these operations in the lift and frame matroids on $E(G)$.  For instance, $L(G,\mathcal{B}) {\setminus} e = L( (G,\mathcal{B}){\setminus}e )$ and $F(G,\mathcal{B})/e = F((G,\mathcal{B})/e)$.  
%Asking that contraction of an unbalanced loop in $(G,\mathcal{B})$ agree with contraction of the same element in $L(G,\mathcal{B})$ and in $F(G,\mathcal{B})$ has the inconvenient result that the contraction must be defined differently in $(G,\mathcal{B})$ depending upon whether it is the lift or the frame matroid one is interested in.  
%%It is convenient for us to avoid contracting unbalanced loops because the operation is defined differently depending whether one is interested in the lift or the frame matroid associated with a biased graph.  
%However, the results of this paper (in particular, those of Sections \ref{sec:Mat_Ex_Minors} and \ref{sec:infinite_antichains}) do not require such an operation, and so it is convenient for us to omit them.  
%%In particular, the results presented in Sections \refsec:Mat_Ex_Minors} and \ref{sec:infinite_antichains} hold when contraction of unbalanced loops using either definition is permitted.  

\section{A Topological Characterisation}

Theorem \ref{main1} consists of statements 1 and 3 of Theorem \ref{main2}, which we prove next.  
For a graph $G$, group labelled by $\phi : E(G) \rightarrow \Gamma$, our basic definitions assign a notion of balance to each cycle.  This notion naturally extends from cycles to closed walks.  For an arbitrary closed walk $W$, we define $W$ to be \emph{balanced} if $\phi(W) = 1$ and call it \emph{unbalanced} otherwise.  
%Note that \marginpar{\small *Note what?} 

Let $W$ be a closed walk in the biased graph $(G, \mathcal{B}$), let $W'$ be a subwalk of $W$ which is a path from $u$ to $v$ and assume that $C$ is a balanced cycle of $G$ which contains the path $W'$.  Let $W''$ be the path from $u$ to $v$ in $C$ distinct from $W'$ and modify $W$ to a new closed walk $W^*$ by replacing $W'$ by $W''$.  In this case we say that $W^*$ is obtained from $W$ by \emph{rerouting along a balanced cycle}, or simply, by a \emph{balanced rerouting}.  
If $\mathcal{B} = \mathcal{B}_{\phi}$ for a group labelling $\phi$, then since $C$ is balanced, $\phi(W') = \phi(W'')$, so $\phi(W^*) = \phi(W)$.  
%In particular, it follows from this that \marginpar{\small *what follows?}

\begin{theorem}
\label{main2}
Let $(G, \mathcal{B})$ be a biased graph and let $K$ be the 2-cell complex obtained from $G$ by adding a disc with boundary $C$ for every $C \in \mathcal{B}$.  Then the following are equivalent.
\begin{enumerate}
\item $G$ is group labellable
\item $G$ is $\pi_1(K)$-labellable.
\item Every cycle $C \not\in \mathcal{B}$ is noncontractible in $K$.
\item There does not exist a sequence of closed walks $W_1, \ldots, W_n$ so that 
	each $W_{i+1}$ is obtained from $W_i$ by a balanced rerouting, $W_1$ is a simple walk around an unbalanced cycle and $W_n$ is a simple walk around a balanced cycle.
\end{enumerate}
\end{theorem}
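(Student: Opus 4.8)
The plan is to prove the four statements equivalent by establishing the cycle of implications $(2)\Rightarrow(1)\Rightarrow(4)\Rightarrow(3)\Rightarrow(2)$. Two of these are essentially free. For $(2)\Rightarrow(1)$ there is nothing to do, since $\pi_1(K)$ is a group and a $\pi_1(K)$-labelling is in particular a group labelling. For $(1)\Rightarrow(4)$ I would invoke the observation recorded just before the statement: if $\mathcal{B}=\mathcal{B}_{\phi}$ for a group labelling $\phi\colon E(G)\to\Gamma$, then a balanced rerouting $W\mapsto W^{*}$ leaves $\phi(W)$ unchanged, so along any sequence of balanced reroutings $W_{1},\dots,W_{n}$ the value $\phi(W_{i})$ is constant; since a simple walk around an unbalanced cycle has $\phi$-value $\neq 1$ and a simple walk around a balanced cycle has $\phi$-value $1$, no such sequence joining the two can exist.

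For $(3)\Rightarrow(2)$ I would exhibit an explicit $\pi_1(K)$-labelling. Assuming $G$ connected (otherwise argue componentwise), fix a basepoint $v$ and a spanning tree $T$, and for each $e\notin T$ let $\ell_{e}$ be the loop at $v$ formed by the $T$-path from $v$ to the tail of $e$, then $e$, then the $T$-path from the head of $e$ back to $v$; the classes $[\ell_{e}]$ freely generate $\pi_{1}(G,v)$. Define $\phi(e)=1$ for $e\in T$ and $\phi(e)=[\ell_{e}]$, viewed in $\pi_{1}(K,v)$ via the natural surjection $\pi_{1}(G,v)\twoheadrightarrow\pi_{1}(K,v)$, for $e\notin T$. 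A routine computation gives, for every cycle $C$, that $\phi$ of a simple closed walk around $C$ is the image in $\pi_{1}(K,v)$ of the class of $C$ in $\pi_{1}(G,v)$; in particular it equals $1$ exactly when $C$ is contractible in $K$. Since each $C\in\mathcal{B}$ bounds a disc of $K$ and is therefore contractible, while by $(3)$ each $C\notin\mathcal{B}$ is noncontractible, we get $\mathcal{B}_{\phi}=\mathcal{B}$. (The same identity, composed with the homomorphism $\pi_{1}(K)\to\Gamma$ that any $\Gamma$-labelling induces, also yields the easy half $(1)\Rightarrow(3)$ of the theorem, if one prefers to organise the implications that way.)

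The real content is $(4)\Rightarrow(3)$, which I would prove contrapositively: if some $C\notin\mathcal{B}$ is contractible in $K$, I must produce a sequence of balanced reroutings carrying the simple closed walk around $C$ to a simple closed walk around a balanced cycle. Contractibility says the class of $C$ lies in the normal subgroup of $\pi_{1}(G)$ generated by the classes of the cycles in $\mathcal{B}$, equivalently that the loop $C$ bounds a singular disc in $K$. Making such a disc transverse to the interiors of the $2$-cells of $K$ realises the nullhomotopy as a finite sequence of elementary moves, each pushing the current boundary loop across a single $2$-cell of $K$, that is, across a disc bounded by some $C_{i}\in\mathcal{B}$; such a push replaces a subpath of the current closed walk lying on $C_{i}$ by the complementary subpath of $C_{i}$, which is precisely a balanced rerouting. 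Phrased combinatorially this is a van Kampen / disc-diagram argument, and I would run it by induction on the number of $2$-cells of the diagram: find a $2$-cell $\sigma$ meeting the boundary of the current diagram in a single embedded arc $\alpha$, reroute the walk by swapping $\alpha$ for the complementary arc of $\partial\sigma$, delete $\sigma$, and recurse; when one $2$-cell remains the boundary walk is a simple walk around a balanced cycle.

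The main obstacle will be making this inductive step legitimate. I need to know that some $2$-cell always meets the current boundary in a single embedded path — not merely a subwalk, since the definition of balanced rerouting demands a path — and, more delicately, that no step ever requires cancelling a spur $ee^{-1}$ from the boundary walk, as spur cancellation is not a balanced rerouting; otherwise the process could stall at a walk that wraps a balanced cycle but carries extraneous backtracks rather than at an honest simple closed walk around a balanced cycle. I expect to resolve this by choosing the disc diagram to have minimal area subject to its boundary being the simple (hence cyclically reduced) walk around $C$: minimality forces the diagram to be reduced and its boundary to be spur-free, and the standard surgery on a reduced disc diagram supplies at each stage a $2$-cell attached to the boundary along an embedded arc. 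Carefully establishing these diagrammatic facts, and confirming that each push meets the letter of the rerouting definition, is where the bulk of the work lies.
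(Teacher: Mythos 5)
Your architecture is the same as the paper's: the cycle $(2)\Rightarrow(1)\Rightarrow(4)\Rightarrow(3)\Rightarrow(2)$, with $(2)\Rightarrow(1)$ trivial, $(1)\Rightarrow(4)$ by the rerouting-invariance observation, and, for $(3)\Rightarrow(2)$, exactly the labelling the paper constructs: after contracting a spanning tree and applying Van Kampen's theorem it labels tree edges by $1$ and each non-tree edge by the corresponding generator of $\pi_1(K)$, which is precisely your assignment $e\mapsto[\ell_e]$; your key identity (the $\phi$-value of a closed walk is the image of its homotopy class, hence is $1$ iff the walk is contractible in $K$) is the displayed chain of equivalences in the paper's proof. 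So three of the four implications match the paper essentially verbatim, and your remark that the same identity gives $(1)\Rightarrow(3)$ directly is correct but not needed.

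The substantive divergence is the implication you rightly call the real content, i.e.\ that the failure of (3) forces the failure of (4). The paper compresses this into one sentence (``the group relations \dots\ yield a sequence of closed walks which violate (4)''), and your disc-diagram peeling is the natural way to substantiate it; but as written your last paragraph has a genuine gap, and the specific claims meant to close it are not accurate. Spur-freeness of the initial diagram comes from the boundary word being the walk around an embedded cycle $C$ (cyclically, each vertex and edge occurs once, which also rules out cut vertices and free edges, so the initial diagram really is a disc), not from area-minimality; and minimal area does not prevent the diagram from pinching after a peel, nor guarantee at every later stage a $2$-cell whose \emph{entire} intersection with the current boundary is a single arc. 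The danger is concrete: if at some stage the diagram degenerates into a wedge of two one-cell discs, the boundary walk is a concatenation of two balanced-cycle traversals meeting in a vertex, and no sequence of balanced reroutings carries such a walk to a simple closed walk around a balanced cycle --- rerouting one lobe away leaves a back-and-forth along an edge, which is not removable by reroutings since the subwalk to be replaced must be a path. So your induction needs a stronger invariant (e.g.\ that the diagram stays homeomorphic to a disc, peeling only cells meeting the boundary in exactly one arc and nowhere else), and proving that such a cell always exists is exactly the work you defer. In fairness, the paper supplies no more detail at this point than you do, so you have not missed an argument the paper gives; but as it stands the crucial implication in your proposal is a plan rather than a proof.
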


\begin{proof} Trivially (2) implies (1), and our preceding discussion noted that (1) implies (4).   So, to complete the proof it will suffice to show that (3) implies (2), and the negation of (3) implies the negation of (4).  

We may assume that $G$ is a connected graph (as the theorem operates independently on components) and choose a spanning tree $T$.  
Let $(G',\mathcal{B}')$ denote the (one vertex) biased graph obtained from $(G, \mathcal{B})$ by contracting every edge in $E(T)$.  
Let $K'$ denote %the 2-cell complex obtained from $G'$ by adding a disc with boundary $C'$ for every $C' \in \mathcal{B}'$.  Now $K'$ is isomorphic to 
the cell complex obtained from $K$ by identifying $T$ to a single point.  
Since $T$ is contractible, it follows that $\pi_1(K) \cong \pi_1(K')$ (see Proposition 0.17 in \cite{MR1867354}).  %Hatcher's Algebraic Topology).  

We now apply a standard result to obtain a natural description of the fundamental group of $K'$.  Give $G'$ an arbitrary orientation, and for every edge $e \in E(G')$ let $\gamma_e$ be a variable.  
For every cycle $C \in \mathcal{B}$ choose a simple closed walk around $C$, and let $e_1, \ldots, e_m$ be the sequence of edges of this walk appearing in $E(G')$ (so this closed walk becomes a sequence of loops on the single vertex of $G'$, obtained by removing from the closed walk around $C$ those edges in $T$). 
%Now for every cycle $C' \in \mathcal{B'}$ choose a simple closed walk around $C'$.  Let $e_1, \ldots, e_m$ be the edge sequence of this walk, and 
For $i \in \{1, \ldots, m\}$, define $\epsilon_i$ to be $1$ if $e_i$ is forward in this walk and $-1$ if it is traversed backward.  Now define $\beta_{C}$ to be the word $\gamma_{e_1}^{\epsilon_1} \gamma_{e_2}^{\epsilon_2} \ldots \gamma_{e_n}^{\epsilon_n}$.  Define $\Gamma$ to be the group presented by the generating set $\{ \gamma_{e} \mid e \in E(G') \}$ with the relations given by setting the words in $\{ \beta_{C} \mid C \in \mathcal{B} \}$ to be the identity.  It follows from an application of Van Kampen's theorem (see Section 1.2 in \cite{MR1867354}) that $\Gamma \cong \pi_1(K') \cong \pi_1(K)$ and furthermore, a closed walk $W$ given by the edge sequence $e_1, \ldots, e_m$ with orientations $\epsilon_1, \ldots, \epsilon_m$ will be contractible in $K'$ if and only if the product $\prod_{i=1}^m \gamma_{e_i}^{\epsilon_i}$ is equal to the identity in $\Gamma$.  

Our next step will be to define a $\Gamma$-labelling of the graph $G$ given by $\phi : E(G) \rightarrow \Gamma$.  For an edge $e \in E(T)$, we orient it arbitrarily and assign $\phi(e) = 1$.  For an edge $e \in E(G) \setminus E(T)$ we orient $e$ as it was oriented in $G'$ and then define $\phi(e) = \gamma_e$.  Let $W$ be a closed walk in $G$ and let $W'$ be the corresponding closed walk in $G'$.  Suppose that $W'$ has edge sequence $e_1, \ldots, e_m$ and that $\epsilon_i = 1$ if $e_i$ is forward in $W'$ and $\epsilon_i = -1$ if it is backward.  Now we have
\begin{align*}
\mbox{$W$ is contractible in $K$} 
	 \iff \mbox{$W'$ is contractible in $K'$}	
	 \iff  \prod_{i=1}^m \gamma_{e_i}^{\epsilon_i} = 1 
	\iff \phi(W) = 1.
\end{align*}
Every balanced cycle in $G$ will be contractible in $K$, so we automatically have $\mathcal{B} \subseteq \mathcal{B}_{\phi}$.  If (3) holds, then every cycle $C \not\in \mathcal{B}$ is uncontractible in $K$ and the above equation implies that $\mathcal{B} = \mathcal{B_{\phi}}$ so $(G, \mathcal{B})$ is $\Gamma$-labellable and (2) holds.  On the other hand, if (3) is violated, there is a cycle $C \not\in \mathcal{B}$ which is contractible in $K$, and a simple closed walk $W_1$ around $C$ will satisfy $\phi(W_1) = 1$.  In this case, the group relations in $\Gamma$ which reduce the product of the corresponding edge labels to the identity yield a sequence of closed walks which violate (4). 
\end{proof}

\bigskip

In the preceding theorem it is shown that whenever $(G, \mathcal{B})$ has a group labelling, it has one using the group $\pi_1(K)$.  In fact, the labelling using this group constructed in the proof has a natural extreme property.  If $\phi$ and $\psi$ are two group labellings of $(G, \mathcal{B})$, then by definition we have $\mathcal{B}_{\phi} = \mathcal{B} = \mathcal{B}_{\psi}$ so these group labellings have the same set of balanced cycles.  However, it is quite possible for a closed walk $W$ to satisfy $\phi(W) = 1$ and $\psi(W) \neq 1$.  The group labelling constructed in the above proof has the unique minimal set of balanced closed walks.  That is, any closed walk which is balanced in the group-labelling defined there will also be balanced under any other valid group-labelling.

\section{General Construction}\label{sec:construction}
In this section we use Theorem \ref{main1} to give a general construction of some biased graphs which are minor-minimal subject to being not group labellable. Before we explain this construction we define minors for biased graphs.

For an edge $e \in E(G)$ we \emph{delete} $e$ from $(G,\mathcal{B})$ by deleting $e$ from $G$ and then removing from $\mathcal{B}$ every cycle containing $e$.  
For a balanced loop $e$, the contraction $(G, \mathcal{B}) /e$ is defined as $(G, \mathcal{B}) \setminus e$.  
For a non-loop edge $e$, we \emph{contract} $e$ from $(G, \mathcal{B})$ by contracting $e$ in the graph and then declaring a cycle $C$ to be balanced if either $C \in \mathcal{B}$ or $E(C) \cup \{e\}$ is the edge set of a cycle in $\mathcal{B}$.  
It is straightforward to verify that both deletion and contraction preserve the theta property, so these operations always yield a new biased graph.  A \emph{minor} of $(G, \mathcal{B})$ is any biased graph formed by a sequence of deletions and contractions.  
(Contraction of an unbalanced loop is permitted but defined differently depending upon whether it is the associated lift or frame matroid one is interested in, so that the operations remain consistent with those in the associated matroids.  
Because our special biased graphs have no unbalanced loops and we only ever delete or contract one edge, we never need to perform a contraction of an unbalanced loop.)

\bigskip

\noindent{\bf Construction:}
Let $G$ be a simple graph embedded in the plane which is equipped with a $t$-vertex colouring satisfying the following:
\begin{enumerate}
\item $G$ is a subdivision of a 3-connected graph.
\item Every colour appears exactly once on every face (so every face has size $t$).
\item Every cycle of $G$ of size $\le t$ is the boundary of a face.
\end{enumerate}
Now we form a graph $\widetilde{G}$ from $G$ by identifying each colour class to a single vertex.  Define $\mathcal{B}$ to be the set of all cycles of $\widetilde{G}$ which correspond to boundaries of finite faces of $G$.  
We claim that $(\widetilde{G}, \mathcal{B})$ is a biased graph.  Since every cycle in $\mathcal{B}$ is a Hamiltonian cycle of $\widetilde{G}$, the only way for a theta subgraph of $\widetilde{G}$ to contain two members $C, C'$ of $\mathcal{B}$ would be for this theta subgraph to have two edges in parallel, with $C$ and $C'$ sharing all but this pair of edges.  But then this pair of edges would be a parallel pair in $G$, contradicting the assumption that $G$ is simple.  Thus each theta subgraph of $\widetilde{G}$ contains at most one member of $\mathcal{B}$ and we conclude that $(\widetilde{G}, \mathcal{B})$ is a biased graph.  

\begin{theorem}
\label{general-construction}
The biased graph $(\widetilde{G},\mathcal{B})$ constructed above is not group labellable.  For every edge $e$ and every infinite group $\Gamma$, each of the biased graphs obtained by deleting and contracting $e$ are $\Gamma$-labellable.
\end{theorem}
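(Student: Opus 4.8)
The plan is to prove the two assertions separately, in each case through Theorem~\ref{main1} (equivalently, Theorem~\ref{main2}). For the non-labellability, let $C_0$ be the boundary cycle of the infinite face of $G$ and let $\widetilde{C_0}$ be its image in $\widetilde G$. By hypothesis~(2), $C_0$ meets each colour class exactly once, so $\widetilde{C_0}$ is a cycle of $\widetilde G$ of length $t$, and since $G$ is not a single cycle, $\widetilde{C_0}\notin\mathcal B$. I would show that condition~(4) of Theorem~\ref{main2} fails. Order the finite faces of $G$ as $F_1,\dots,F_k$ so that, writing $R_0$ for the closed infinite face and $R_j=R_{j-1}\cup\overline{F_j}$, each $R_j$ with $0\le j\le k-1$ is a closed disc and $F_j\cap R_{j-1}$ is a single nontrivial arc for $j\ge 1$; such a shelling order of the faces of a plane graph is standard (for instance, take the faces in the order of a depth-first traversal of a spanning tree of the dual rooted at the infinite face). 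Setting $W_1:=\widetilde{C_0}=\widetilde{\partial R_0}$ and $W_{j+1}:=\widetilde{\partial R_j}$, one checks that $W_1$ is a simple walk around the unbalanced cycle $\widetilde{C_0}$, that $W_k=\widetilde{\partial F_k}$ is a simple walk around the balanced cycle $\widetilde{\partial F_k}$, and that each $W_{j+1}$ is obtained from $W_j$ by rerouting along $\widetilde{\partial F_j}$ (this is a legitimate balanced rerouting because, by hypothesis~(2), $\widetilde{\partial F_j}$ is a simple $t$-cycle in $\mathcal B$ containing the subwalk of $W_j$ running along the arc $F_j\cap R_{j-1}$). This contradicts~(4), so $(\widetilde G,\mathcal B)$ is not group labellable.

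Now fix an edge $e$, lying on the faces $F'$ and $F''$ of $G$, let $(\widetilde G,\mathcal B)_e$ denote either the deletion or the contraction of $e$, and let $K_e$ be its $2$-complex. For the deletion, $K_e$ is the subcomplex of $K$ obtained by removing the $1$-cell $e$ together with the two discs bounded by $\widetilde{\partial F'}$ and $\widetilde{\partial F''}$ (no longer cycles), every remaining disc of $K$ still being genuine by hypothesis~(2); for the contraction, $e$ is collapsed, the faces $F',F''$ become $(t-1)$-gons which rejoin the balanced set, and an analogous description holds. In both cases the minor is, up to a possible single extra unbalanced cycle, again of the form ``a plane graph with every face but one declared balanced''. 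By Theorem~\ref{main1} it then suffices to prove (a)~every cycle of $(\widetilde G,\mathcal B)_e$ outside its balanced set is noncontractible in $K_e$, and (b)~that group labellability of the minor can be upgraded to $\Gamma$-labellability for every infinite $\Gamma$.

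Step~(a) is the crux, and I expect it to be the main obstacle. A cycle $Z$ of the minor that is contractible in $K_e$ is also contractible in $K$ (for the deletion because $K_e\subseteq K$; for the contraction because collapsing the contractible $1$-cell $e$ is a homotopy equivalence, $Z$ corresponding to an honest cycle of $\widetilde G$, possibly after appending $e$), and a short argument using that $Z$ avoids $e$ gives $Z\notin\mathcal B$; since $(\widetilde G,\mathcal B)$ is not group labellable, $Z$ would then be one of its exceptional (unbalanced but contractible) cycles. The real work is to show that deleting or contracting the single edge $e$ destroys every such cycle — that is, that every unbalanced cycle of $\widetilde G$ contractible in $K$ must use $e$, or lie in the balanced set of the minor, or cease to be contractible once the discs bounded by $\widetilde{\partial F'}$ and $\widetilde{\partial F''}$ are removed. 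This is where hypotheses~(1)--(3) are essential: lifting such a cycle to a union of paths in the plane graph $G$, hypothesis~(3) forces the short cycles that appear to be face boundaries, hypothesis~(2) constrains how the colour classes meet those faces, and hypothesis~(1), via Whitney's theorem, pins down the planar structure; together these should show that the only exceptional cycles of $(\widetilde G,\mathcal B)$ are ``built from'' the infinite face together with $F'$ and $F''$, so that the combinatorial data at $e$ is responsible for all of them.

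For step~(b): by Theorem~\ref{main2} the minor, being group labellable, is $\pi_1(K_e)$-labellable, and $\pi_1(K_e)$ is a free group (of rank $r$, say, readable off from an Euler-characteristic count) because $K_e$ is a $2$-complex obtained from a quotient of the $2$-sphere by deleting, among other cells, the cell of the infinite face, hence is homotopy equivalent to a graph. Let $\phi$ be the canonical labelling of the minor from the proof of Theorem~\ref{main2}, sending a spanning tree to the identity and the remaining edges to a free basis $b_1,\dots,b_r$. For each of the finitely many unbalanced cycles $Z$ of the minor, $\phi$ evaluated on a closed walk around $Z$ is a nontrivial element $[Z]$ of the free group (by step~(a)) in which, since a cycle traverses each edge at most once, each $b_i$ occurs at most once. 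Given an infinite group $\Gamma$, choose images $g_1,\dots,g_r\in\Gamma$ of the $b_i$ one at a time: when $g_i$ is being chosen, each relation $[Z]=1$ in which $b_i$ is the highest-indexed basis element occurring determines $g_i$ uniquely in terms of the previously chosen $g_j$'s and so forbids at most one value of $g_i$; as there are only finitely many such relations and $\Gamma$ is infinite, a permissible $g_i$ exists. The induced homomorphism $h$ then satisfies $h([Z])\neq 1$ for every unbalanced $Z$, so $h\circ\phi$ is a $\Gamma$-labelling of $(\widetilde G,\mathcal B)_e$. Since $\mathcal G_\Gamma$ is closed under minors, this would also complete the proof of Theorem~\ref{complete-construction}.
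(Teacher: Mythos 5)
Your first half is fine: showing that the outer cycle $\widetilde{C_0}$ is unbalanced and then violating condition (4) of Theorem~\ref{main2} by peeling off the finite faces one at a time is essentially the paper's argument in combinatorial clothing (the paper simply observes that $\widetilde{C_0}$ bounds the quotient of the disc and applies condition (3), which avoids having to justify the existence of your shelling order --- note that a DFS order of the dual tree does not by itself guarantee that each new face meets the previously swallowed region in a single arc, so that detail would still need care).

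The second half, however, is where the theorem's real content lies, and there you have genuine gaps. First, your structural description of the contraction is wrong: every balanced cycle of $(\widetilde{G},\mathcal{B})$ is Hamiltonian, so after contracting $e$ every balanced cycle \emph{not} containing $e$ passes twice through the merged vertex and ceases to be a cycle; hence $(\widetilde{G},\mathcal{B})/e$ has at most two balanced cycles (the faces of $G$ containing $e$), not ``a plane graph with every face but one declared balanced,'' and its complex is not $K$ with $e$ collapsed. Second, and more importantly, your step (a) --- that every unbalanced cycle of the minor is noncontractible in $K_e$ --- is exactly the hard part, and you do not prove it; you only state that hypotheses (1)--(3) ``should show'' that all exceptional cycles of $(\widetilde G,\mathcal B)$ are destroyed by removing $e$. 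The paper never attempts such a topological analysis: instead it verifies labellability of the minors directly, by choosing a ``generic'' sequence $g_0,g_1,\ldots$ in $\Gamma$ (each $g_k$ not expressible as a word of length $\le 3t$ in the earlier elements), labelling edges of the relevant subgraph by $g_i^{-1}g_j$ (with a twist $g_i^{-1}g_0 g_j$ across a dual path in one deletion case) and the remaining edges by fresh generic elements, and checking cycle by cycle that the resulting $\mathcal{B}_\phi$ equals the minor's balanced set. Third, your step (b) is also flawed as written: in the canonical $\pi_1(K_e)$-labelling from Theorem~\ref{main2} the non-tree edges are sent to the \emph{generators of a presented group}, which satisfy the balanced-cycle relations and therefore do not form a free basis even when $\pi_1(K_e)$ is free; your claim that each unbalanced cycle's value is a word in which each basis element occurs at most once rests on that false identification, so the greedy choice of images $g_i$ does not go through in the form you give (the paper's explicit construction makes such an upgrade unnecessary).
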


\begin{proof} Let $K$ be the 2-cell complex obtained from the embedded graph $G$ by removing the infinite face.  Thus $K$ is a disc and its boundary is a cycle $C$.  Now let $\widetilde{K}$ be the 2-cell complex obtained from $K$ by identifying each colour class of vertices to a single point.  The cycle $C$ is a contractible curve in $K$, so it is also a contractible curve in $\widetilde{K}$.  Since $C \not\in \mathcal{B}$, by Theorem \ref{main1}, $(\widetilde{G}, \mathcal{B})$ is not group-labellable.

Now let $e \in E(\widetilde{G})$, and let $\Gamma$ be an infinite group (written multiiplicatively).  We construct a $\Gamma$-labelling of $(\widetilde{G},\mathcal{B}) \setminus e$ and a $\Gamma$-labelling of $(\widetilde{G},\mathcal{B})/e$.  
%For the second part of the proof construct group labellings for which only certain controlled group products give the identity.  
In preparation for this we choose a useful sequence of group elements.  
%Let $\Gamma$ be an arbitrary infinite group (written multiplicatively) and 
Choose $g_0 \in \Gamma \setminus \{1\}$.  For $1 \le k \le |E(G)| + |V(G)|$ choose $g_k \in \Gamma$ so that $g_k$ cannot be expressed as a word of length $\le 3t$ using $g_0, g_0^{-1}, \ldots, g_{k-1}, g_{k-1}^{-1}$.  
%Let $e \in E(G)$.  We consider the two cases contraction and deletion of $e$.

\bigskip
\newpage 

\noindent{\it Contraction}

%Consider the biased graph $({\widetilde{G}}', \mathcal{B}')$ obtained from $(\widetilde{G},\mathcal{B})$ by contracting edge $e$.  
Write $(\widetilde{G}',\mathcal{B}') = ({\widetilde{G}}, \mathcal{B}) /e$.  
Since every cycle in $\mathcal{B}$ is Hamiltonian in $\widetilde{G}$, every such cycle not containing $e$ will form handcuffs upon contracting $e$.  So the only cycles in $\mathcal{B}'$ correspond to finite faces of the planar graph $G$ which contain $e$; thus $|\mathcal{B}'| \le 2$.  
To $\Gamma$-label $\widetilde{G}'$, we label $E(G) \setminus e$; $\widetilde{G}'$ then inherits its labels from $G/e$.  
Let $H$ be the subgraph of $G$ consisting of all its vertices and edges that are on a finite face containing $e$.  It follows from the assumption that $G$ is a subdivision of a 3-connected graph that $H$ must either be a cycle or a theta subgraph (depending on whether $e$ lies on the infinite face or not).  Let $V(H/e) = \{ v_0, \ldots, v_{n} \}$ and let $E(G) \setminus E(H) = \{ e_{n+1}, \ldots, e_m \}$.  To construct the $\Gamma$-labelling, give $G$ an arbitrary orientation, and assign edge labels as follows.  For every edge $f \in E(H/e)$, if $f= v_i v_j$, oriented from $v_i$ to $v_j$, let $\phi(f) = g_i^{-1} g_j$.  For every edge $e_k \in E(G) \setminus E(H)$, define $\phi(e_k) = g_k$.

We claim that $\phi$ realises $\mathcal{B}'$; \emph{i.e.}\ that $\mathcal{B}_{\phi} = \mathcal{B'}$.  To prove this, let $\widetilde{D}$ be an arbitrary cycle in $\widetilde{G}'$.  We show that either $\widetilde{D}$ is in both $\mathcal{B}'$ and $\mathcal{B}_{\phi}$ or $\widetilde{D}$ is in neither.  Define $D$ to be the subgraph of $G$ induced by $E(\widetilde{D})$ (so $D$ is either a cycle or a union of disjoint paths).  First suppose that $\widetilde{D}$ contains an edge $e_k \in E(G) \setminus E(H)$, and choose such an edge for which $k$ is maximum.  Since $e_k \notin H$, we have $\widetilde{D} \not\in \mathcal{B}'$.  If $W$ is a simple closed walk in $\widetilde{G}'$ around $\widetilde{D}$ beginning with $e_k$ in the forward direction, then $\phi(W)$ has the form $g_k$ times a word of length $< 2(t-1) < 3t$ consisting of group elements in $\{g_0, g_0^{-1}, \ldots, g_{k-1}, g_{k-1}^{-1}\}$.  Thus $\phi(W) \neq 1$ and we have $\widetilde{D} \not\in \mathcal{B}_{\phi}$ as desired.  
So now suppose $E(\widetilde{D}) \subseteq E(H)$.  
If $D$ is a cycle in $H/e$, then $\widetilde{D} \in \mathcal{B}'$ and $\widetilde{D} \in \mathcal{B}_{\phi}$ by definition.  
If $D$ is not a cycle in $H/e$, then $\widetilde{D} \not\in \mathcal{B}'$ and we must show that $\widetilde{D} \not\in \mathcal{B}_{\phi}$.  
Let $D_1, \ldots, D_r$ be the components of $D$, let $W$ be a simple closed walk around $\widetilde{D}$ and assume that $W$ encounters each $D_i$ consecutively.  
If the subwalk $W'$ of $W$ traversing $D_h$ begins at $v_i$ and ends at $v_j$, then we have $\phi(W') = g_i^{-1} g_j$.  Therefore, if we choose $k$ to be the largest value so that $v_k$ is an endpoint of one of the paths $D_1, \ldots, D_r$ then $\phi(W)$ may be expressed as a word of length $\leq 2r < 2(t-1) < 3t$ using exactly one copy of $g_k$ or $g_k^{-1}$ with all other terms equal to one of $g_0, g_0^{-1}, \ldots, g_{k-1}, g_{k-1}^{-1}$.  It follows that $\widetilde{D} \not\in \mathcal{B}_{\phi}$ as desired.

\bigskip

\noindent{\it Deletion}

%Now consider the biased graph $(\widetilde{G}', \mathcal{B}')$ obtained from $(\widetilde{G}, \mathcal{B})$ by deleting $e$.  
Now let $(\widetilde{G}', \mathcal{B}') = (\widetilde{G}, \mathcal{B}) \setminus e$.
First suppose that $e$ is incident with the infinite face of $G$.  In this case, let $V(G)= \{v_0, \ldots, v_n\}$ and associate each $v_i$ with group element $g_i$.  Orient the edges in $E \setminus e$ arbitrarily, and for every $f \in E \setminus e$ oriented from $v_i$ to $v_j$ define $\phi(f) = g_i^{-1} g_j$.  We claim that $\mathcal{B}_{\phi} = \mathcal{B}'$.  
To prove this (as before) we let $\widetilde{D}$ be an arbitrary cycle in $\widetilde{G}'$ and we let $D$ be the corresponding subgraph of $G \setminus e$.  As before, the graph $D$ must either be a cycle or a union of disjoint paths.  If $D$ is a cycle, then by property 3 of $G$, it must be a face boundary, so $\widetilde{D} \in \mathcal{B'}$ by definition and $\widetilde{D} \in \mathcal{B}_{\phi}$ by construction.  If $D$ is a union of disjoint paths given by $D_1, \ldots, D_r$, then $\widetilde{D} \not\in \mathcal{B}'$ and we must show that $\widetilde{D} \not\in \mathcal{B}_{\phi}$.  As before, choose a closed walk $W$ traversing $\widetilde{D}$ so that it encounters each $D_h$ consecutively.  If the subwalk $W'$ of $W$ traversing $D_h$ starts at $v_i$ and ends at $v_j$, then $\phi(W') = g_i g_j^{-1}$.  So as before, if $k$ is the largest integer so that $v_k$ is an endpoint of one of the paths $D_1, \ldots, D_r$, we find that $\phi(W)$ may be written as a word of length $\leq 2r \leq 2t < 3t$ using only one copy of either $g_k$ or $g_k^{-1}$ with all other terms one of $g_0, g_0^{-1}, \ldots, g_{k-1}, g_{k-1}^{-1}$.  It follows that $\widetilde{D} \not\in \mathcal{B}_{\phi}$ as desired.

Finally suppose that $e$ is not incident with the infinite face and let $R$ be the new face in $G \setminus e$ formed by deleting $e$ from $G$.  Choose a path $P$ in the dual graph of $G \setminus e$ from the infinite face to $R$ and then orient the edges in $E \setminus e$ so that the edges dual to those in $P$ cross the path $P$ consistently (for instance, if $P$ is given a direction, then $E \setminus e$ may be oriented so that each edge dual to one in $P$ crosses $P$ from the left to the right).  Now let $V(G) = \{v_0, \ldots, v_n\}$ and define a $\Gamma$-labelling as follows.  If $f$ is an edge from $v_i$ to $v_j$ and $f$ is not dual to an edge in $P$, let $\phi(e) = g_i^{-1} g_j$; if $e$ is dual to an edge in $P$, let $\phi(e) = g_i^{-1} g_0 g_j$.  Observe that for any closed walk $W$ in $G \setminus e$ we have $\phi(W) = g_0^s$ where $s$ is the number of times the curve $W$ winds around the face $R$.  Following our above procedure, now let $\widetilde{D}$ be a cycle of $\widetilde{G}'$ and let $D$ be the corresponding subgraph of $G \setminus e$.  If $D$ is a cycle, then since its length is at most $t$, it bounds a face in $G \setminus e$ other than $R$.  If this is a finite face, then $\widetilde{D} \in \mathcal{B}'$ and by definition $\widetilde{D} \in \mathcal{B}_{\phi}$.  If this is the infinite face, then $\widetilde{D} \not\in \mathcal{B}'$ and since this face winds around $R$ exactly once we have $\phi(W) = g_0$ or $\phi(W)=g_0^{-1}$, so $\widetilde{D} \not\in \mathcal{B}_{\phi}$.  
Finally, if $D$ is a union of disjoint paths $D_1, \ldots, D_r$ then $\widetilde{D} \not\in \mathcal{B}'$ and we must show $\widetilde{D} \not\in \mathcal{B}_{\phi}$.  
Choose a closed walk $W$ traversing $\widetilde{D}$ encountering each $D_h$ consecutively.  
Let $W = e_1 e_2 \cdots e_s$.  
Then $s \leq t$, and $\phi(W) = \phi(e_1) \phi(e_2) \cdots \phi(e_s)$ is a word of length $\leq 3s$ since each word $\phi(e_i)$ is a word of the form $g_i^{-1} g_j$, $g_i^{-1} g_0 g_j$, or $g_i^{-1} g_0^{-1} g_j$, and so has length at most 3.  
%If the subwalk $W'$ of $W$ traversing $D_h$ starts at $v_i$ and ends at $v_j$ then our definitions imply $\phi(W') = g_i^{-1} g_0^{s} g_j$ where $s$ is an integer with $|s|$ at most the length of $W'$.  
Letting $k$ be the largest value so that $v_k$ is an endpoint of one of the paths $D_1, \ldots, D_r$ we have that $\phi(W)$ may be written as a word of length $\le 3t$ using just one copy of either $g_k$ or $g_k^{-1}$ and all other terms one of $g_0, g_0^{-1}, \ldots, g_{k-1}, g_{k-1}^{-1}$.  As before, this implies that $\phi(W) \neq 1$ so $\widetilde{D} \not\in \mathcal{B}_{\phi}$ as desired.
\end{proof}

\section{Excluded Minors - Biased Graphs}

In this section we prove Theorem \ref{complete-construction}, giving us a large collection of minor-minimal not group labellable biased graphs each of whose underlying simple graph is complete.  
%In addition to this we will 
We then construct some families of minor-minimal not group labellable biased graphs each of whose underlying simple graphs is a cycle.  
These results are based upon the general construction from the previous section together with certain families of coloured planar graphs.  We begin by introducing two basic families of coloured planar graphs.  (These colourings are proper.)  

For every positive integer $k$ we define $F_{2k}$ to be the coloured planar graph given as follows.  Begin with a cycle of length $2k$ embedded in the plane in which vertices are alternately coloured $0$ and $1$.  Then add two additional vertices, one in each face, each adjacent to all vertices on this cycle and each of colour $a$ (Figure \ref{fig:f_2k}).  
\begin{figure}[htbp]
  \centering
  \includegraphics[scale=0.75]{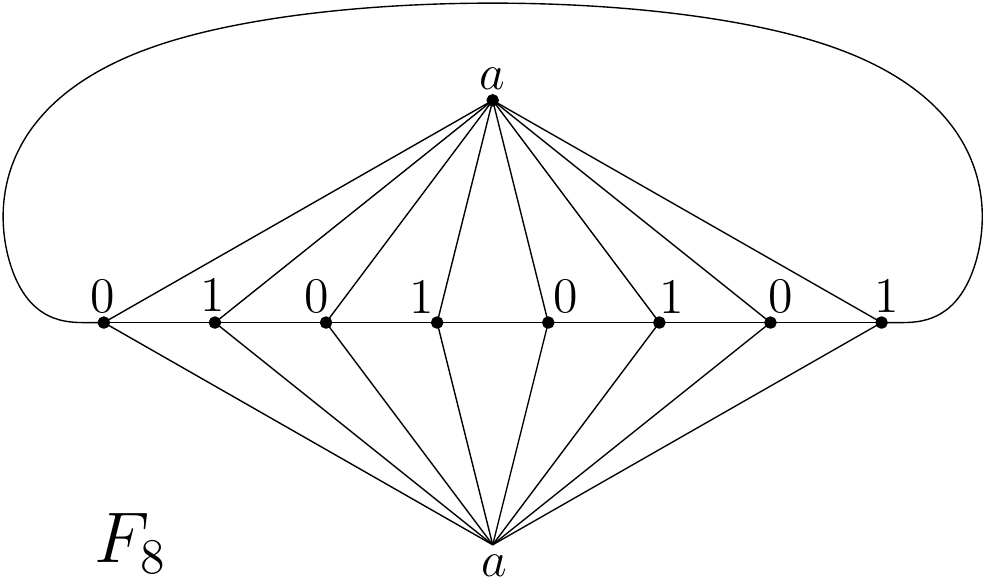}
  \caption{}
  \label{fig:f_2k}
\end{figure}

For every positive integer $k$ we define $H_{2k}$ to be the planar graph constructed as follows.  Begin with $2k$ nested 8-cycles embedded in the plane, each joined to the previous and the next by a perfect matching.  Colour this portion of the graph by colouring the innermost cycle $b$, $0$, $b$, $1$, $b$, $0$, $b$, $1$, and extend this colouring so that every 4-cycle (of the present graph) contains exactly one vertex of each of the colours $\{a,b,0,1\}$ (this extension is unique).  Finally, add a vertex $v_1$ in the inner 8-cycle of colour $a$ joined to all vertices on this cycle not of colour $b$ and similarly, add a vertex $v_2$ in the infinite face coloured $b$ and adjacent to all vertices not of colour $a$ on this face (Figure \ref{fig:h_2k}).  
\begin{figure}[htbp]
  \centering
  \includegraphics[scale=0.9]{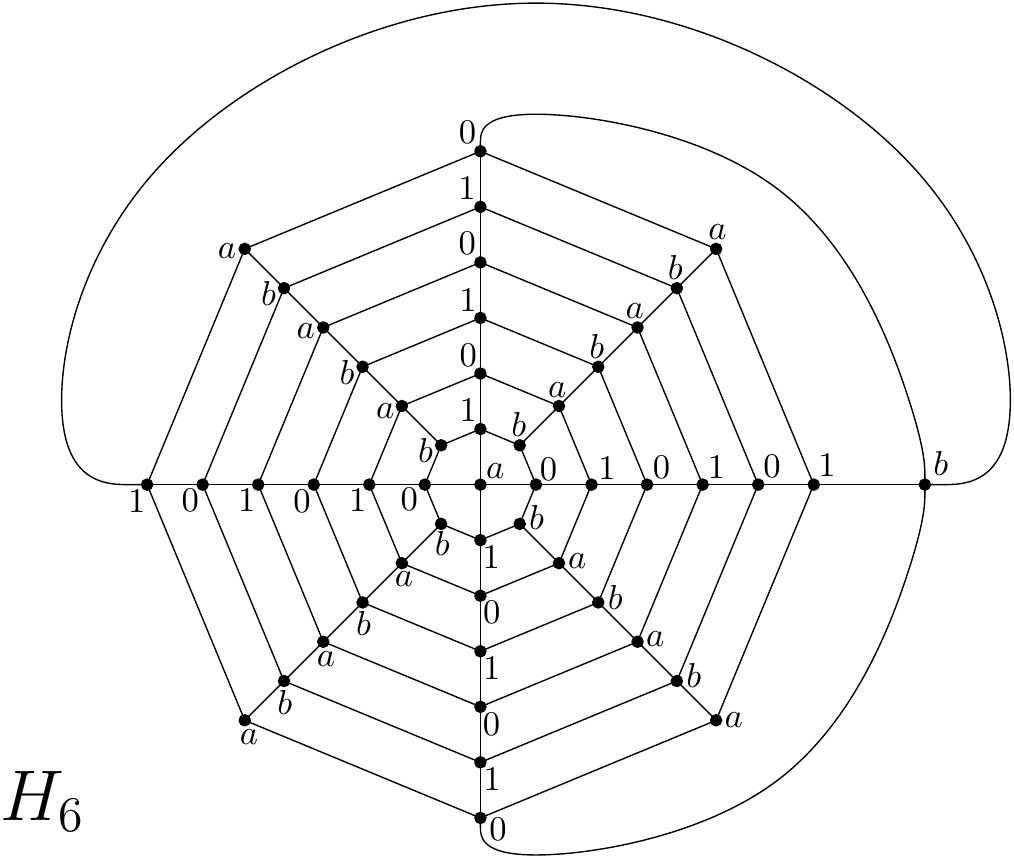}
  \caption{}
  \label{fig:h_2k}
\end{figure}
Next we use these to construct some useful families of coloured planar graphs.  

\begin{lemma}
\label{coloured-planar}
For every $t \ge 3$ and $\ell$ there exists a $t$-coloured planar graph with the following properties:
\begin{enumerate}
\item $G$ is a subdivision of a 3-connected graph.
\item Every colour appears exactly once on every face (so every face has size $t$).
\item Every cycle of $G$ of size $\le t$ is the boundary of a face.
\item Every pair of distinct colours appear on opposite ends of at least $\ell$ edges.  
\end{enumerate}
\end{lemma}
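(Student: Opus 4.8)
The plan is to build the required $t$-coloured planar graph from the two families $F_{2k}$ and $H_{2k}$ introduced above, by combining them so that all four properties hold simultaneously. Property 4 is the genuinely new constraint — it forces arbitrarily large edge-multiplicity between every colour pair — so the bulk of the argument is arranging enough "nested" structure to produce $\ell$ edges between each pair of colours while not destroying properties 1–3. I would handle the two regimes $t=3$ and $t\ge 4$ separately, or more uniformly, split off the colours $\{0,1\}$ (and, when $t=4$, the colour $b$) as the "cyclic" part and let the remaining $t-2$ colours play the role of the freely added central vertices in the style of $F_{2k}$.

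First I would treat small $t$ as a warm-up. For $t=3$, take the graph $F_{2k}$: its faces all have size $3$, every colour ($0$, $1$, $a$) appears exactly once per face, and for $k$ large one checks directly that properties 1 and 3 hold (it is a subdivision — in fact equal to — a $3$-connected graph once $k\ge 2$, and its only short cycles are triangular faces). Counting edges: each of the two apex vertices of colour $a$ contributes $k$ edges to colour $0$ and $k$ to colour $1$, and the base cycle contributes $2k$ edges between $0$ and $1$; so choosing $k\ge\ell$ gives property 4. For $t=4$ I would use $H_{2k}$ in the same spirit, verifying that the nested $8$-cycles together with the two apex vertices $v_1,v_2$ realise all four colours once per (size-$4$) face, that the graph is a subdivision of a $3$-connected graph, that every cycle of length $\le 4$ is a face boundary (this is where the "$2k$ nested $8$-cycles" buffer is used — short non-facial cycles would have to use a matching edge between consecutive rings, and the $8$-cycle girth of each ring prevents a closing path of total length $\le 4$), and that between each pair of colours there are on the order of $k$ edges.

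For general $t\ge 3$ the idea is to iterate/generalise the $F_{2k}$ construction: take a long cycle using the two colours $0,1$ alternately, but now subdivide and decorate it so that the $t-2$ "extra" colours each appear, and add copies of apex-like vertices in each face to force those colours to meet $0$, $1$, and each other at least $\ell$ times. Concretely I would build concentric layers (as in $H_{2k}$) so that successive layers give fresh edges between every pair among the $t-2$ non-cyclic colours; the number of layers is chosen as a function of $\ell$ and $t$ only. At each stage one re-verifies (2) by construction (the colouring pattern is forced and periodic), (1) by noting the underlying simple graph is clearly $3$-connected once there are $\ge 2$ layers, and (3) by a girth argument: any cycle of length $\le t$ that is not a face boundary would have to "jump" between layers using few edges, but the layers are long enough (length a multiple of $t$, say) that no such short closing path exists. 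Property 4 is then a bookkeeping count over the layers and apex vertices.

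The main obstacle I expect is property 3 in the general construction: ensuring that *no* cycle of length $\le t$ other than a face boundary exists, simultaneously with the high edge-multiplicity demanded by property 4. These pull in opposite directions — lots of parallel structure between colour classes tends to create short non-facial cycles (e.g.\ a $4$-cycle using two edges between colours $0$ and $1$ that do not cobound a face). The resolution is to keep all such "parallel" edges in *different* layers separated by long paths, so that any cycle using two of them is forced to have length exceeding $t$; making this precise — choosing layer lengths as an explicit multiple of $t$ and checking every case of how a short cycle could close — is the technical heart of the proof. The remaining verifications (subdivision of a $3$-connected graph, exact colour count per face) are routine consequences of the periodic construction.
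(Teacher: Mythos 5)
Your base cases ($t=3$ via $F_{2k}$, $t=4$ via $H_{2k}$) are correct and match the paper, and you have correctly located the tension between properties 3 and 4. But for general $t$ the proposal is a plan rather than a proof, and the plan as stated has a concrete unaddressed difficulty. You propose to ``add copies of apex-like vertices in each face'' and to stack ``concentric layers'' to generate the $\ell$ edges between the $t-2$ non-cyclic colours. Property 2, however, demands that \emph{every} face have size exactly $t$ and carry each colour exactly once; inserting an apex vertex into a face and joining it to the boundary creates triangular faces, and inserting several apexes of different colours into one face makes it essentially impossible to keep all resulting faces as $t$-gons seeing all $t$ colours. You never explain how the layered/multi-apex structure is reconciled with this constraint --- you dismiss (2) as ``forced and periodic,'' which is exactly the point at which the construction could fail. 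You also explicitly defer ``the technical heart of the proof'' (the case analysis showing no short non-facial cycle exists). So there is a genuine gap: the object whose existence the lemma asserts is not actually constructed for $t\ge 5$.

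For comparison, the paper avoids new layers and new apexes entirely. Writing $t=2s+1$ (resp.\ $t=2s+2$), it keeps the graph a subdivision of the fixed template $F_{2k}$ (resp.\ $H_{2k}$) and does two things: (a) it recolours the alternating $0/1$ cycle by a sequence $x_1,\dots,x_{2k}$ over $2s$ colours, chosen so that every even--odd colour pair occurs consecutively at least $\ell$ times --- this settles property 4 for those pairs; and (b) it subdivides each apex edge exactly $s-1$ times and colours the subdivision vertices so that each face still sees all $t$ colours once, while distributing the colours on these subdivision vertices to supply the $\ell$ edges for the remaining pairs (apex--colour and same-parity pairs). Because the result is just a subdivision of $F_{2k}$ or $H_{2k}$ with each apex edge stretched to length $s$, properties 1--3 follow by the same short argument as in the base case (any non-facial cycle uses at least two stretched apex paths or a long arc of the outer cycle, hence has length exceeding $t$). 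If you want to salvage your approach, the fix is to abandon the extra apexes and instead put the $t-2$ extra colours on subdivision vertices of the existing apex edges, which is precisely the paper's move.
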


\begin{proof} 
We split into cases depending on the parity of $t$.
\bigskip

\noindent
\textbf{Case 1:} $t$ odd.

\smallskip
For $t=3$ the coloured graphs $F_{2k}$ with $k \geq \min\{\ell/2, 2\}$ have properties 1-4.  %suitably large satisfy the constraints.  
In general, we choose $s$ so that $t = 2s+1$, use the colour set $\{a\} \cup \{1,2,\ldots,2s\}$, and modify $F_{2k}$, taking $k$ as large as necessary to achieve what is required in each step.  
Begin by choosing a sequence $x_1,x_2,\ldots,x_{2k}$ of elements from $\{1,2,\ldots,2s\}$ with the following properties:
\begin{enumerate}
	\item[(i)] every $x_i$ has the same parity as $i$,
	\item[(ii)] every pair of numbers in $\{1,2,\ldots,2s\}$ with differing parities appear consecutively in this sequence at least $\ell$ times, and
	\item[(iii)] every element in $\{1,2,\ldots,2s\}$ appears at least $\ell(s-1)$ times in the sequence.
\end{enumerate}
Now modify the colouring of the graph $F_{2k}$ by replacing the sequence of $0$ and $1$ colours by $x_1, \ldots, x_{2k}$.  Next, for every edge with one end of colour $a$ and the other end an odd (even) colour $i$ we subdivide this edge $s-1$ times and give these new vertices distinct odd (even) colours in $\{1,\ldots,2s\} \setminus \{i\}$.  
We choose this assignment of colours with some extra restrictions, which we explain next.
Let $v_1$ and $v_2$ be the two vertices of colour $a$.  
For every edge $v_1w$ in $F_{2k}$, where $w$ is coloured $x_i$, assign colour $x_i+2$ (modulo $2s$) to the neighbour of $v_1$ in the subdivided edge $v_1w$.
With this choice we have that $v_1$ has at least $\ell$ neighbours of each colour $\{1,\ldots,2s\}$.  
Finally we ensure that every pair of distinct colours of the same parity appear on opposite ends of at least $\ell$ edges by enforcing the following restriction on the choice of colouring of the vertices sharing a face with $v_2$. 
We do the following for every choice of  $j \in \{1,\ldots,2s\}$: let $w_1,\ldots,w_{\ell(s-1)}$ be a set of $\ell(s-1)$ vertices coloured $j$ in $F_{2k}$.  %sharing a face with $v_2$. 
Let $u_i$ be the degree-2 neighbour of $w_i$ in the subdivided $w_iv_2$ edge. 
For every $n \in \{1,\ldots,s-1\}$ assign colour $j+2n$ to vertices $u_{(n-1)\ell+1}, \ldots, u_{n\ell}$.
The resulting coloured planar graph then has the desired properties.  

\bigskip

\noindent
\textbf{Case 2:} $t$ even.

\smallskip

For $t=4$ the coloured graphs $H_{2k}$ with $k \geq \min\{\ell/4, 2\}$ satisfy properties 1-4.  In general, we choose $s$ so that $t=2s+2$, use the colour set $\{a,b\} \cup \{1,2,\ldots,2s\}$, and modify $H_{2k}$, taking $k$ as large as necessary to achieve what is required in each step.  
Begin by choosing a sequence $x_1,x_2,\ldots,x_{2k}$ of elements from $\{1,2,\ldots,2s\}$ with the following properties:
\begin{enumerate}
	\item[(i)] every $x_i$ has the same parity as $i$,
	\item[(ii)] every pair of numbers in $\{1,2,\ldots,2s\}$ with differing parities appear consecutively in this sequence at least $\ell$ times, and
	\item[(iii)] every element in $\{1,2,\ldots,2s\}$ appears at least $\ell s$ times in the sequence.
\end{enumerate}
%Begin by choosing a sequence $x_1, \ldots x_{2k}$ of elements from $\{1,\ldots,2s\}$ with the property that every $x_i$ has the same parity as $i$ and further, every pair of numbers in $\{1,\ldots,2s\}$ with differing parities appears consecutively in this sequence at least $\ell$ times.  
Now consider the coloured graph $H_{2k}$.  
Let $P_1, P_3$ be the paths of length $2k-1$ that are coloured alternately $0$ and $1$ beginning with a vertex incident to $v_1$ coloured $1$, and let $P_2, P_4$ be the paths of length $2k-1$ that are coloured alternatively $0$ and $1$ beginning with a vertex coloured $0$ incident to $v_1$.  
%and 
%\marginpar{\scriptsize ? This will never happen} 
%\emph{?\ assume that $P_1,P_3$ have no internal vertices which are cofacial.  }
Modify the colouring of $H_{2k}$ by replacing the colours along each of $P_1$ and $P_3$ with the sequence of colours $x_1, \ldots, x_{2k}$ (starting at the vertex coloured 1), and replacing the colours along each of $P_2$ and $P_4$ with the sequence of colours $x_2, \ldots, x_{2k}, x_1$ (starting at the vertex coloured 0).  
Note that in this manner we have replaced each vertex previously coloured 0 with an even colour, and each vertex previously coloured 1 with an odd colour.  
Now we modify the graph by the following procedure.  
Aside from the four edges incident with the central vertex $v_1$ (coloured $a$) and the four edges incident with outer vertex $v_2$ (coloured $b$), for every other edge 
\begin{itemize} 
\item  $ai$ or $bi$ with $i$ even: subdivide the edge $s-1$ times and give each new vertex a distinct even colour from $\{1, \ldots, 2s\} \setminus \{i\}$; 
\item  $ai$ or $bi$ with $i$ odd: subdivide the edge $s-1$ times and give each new vertex a distinct odd colour from $\{1, \ldots, 2s\} \setminus \{i\}$. 
\end{itemize}
These subdivisions ensure that every colour appears exactly once on every face.  %as each face has $2s+2=t$ vertices, and each is coloured with a different colour.  

%with one end of colour $a$ ($b$) and the other an odd (even) integer $i$ we subdivide this edge $s-1$ times and we give each new vertex a distinct odd (even) colour from the set $\{1,\ldots,2s\} \setminus \{i\}$.  
Similarly to the previous case, we choose this assignment of colours with some extra restrictions to ensure that for every pair of colour classes there are at least $\ell$ edges joining vertices of different colours.  We now describe these restrictions.  
To help with bookkeeping, we partition the set of all pairs of colour classes into eight types: even-even, odd-odd, even-odd, $a$-even, $a$-odd, $b$-even, $b$-odd, and $a$-$b$ (where each pair of colour classes belongs to the obvious type described by its name).  
Property (ii) of our chosen sequence $x_1, x_2, \ldots, x_{2k}$ ensures that we have at least $\ell$ edges between all even-odd pairs of colour classes.  
Our coloured graph $H_{2k}$ has $4(2k-1)$ edges with one endpoint coloured $a$ and the other endpoint coloured $b$.  These edges remain in our modified graph; since $k$ is taken large enough to accommodate the sequence $x_1, x_2, \ldots, x_{2k}$ required by property (ii), we certainly have at least $\ell$ edges with one endpoint coloured $a$ and the other coloured $b$.  
We ensure that this also holds for all remaining pairs of colour classes by colouring the new vertices on the subdivided edges $ai$ and $bi$ as follows.  
The $2k$ subdivided edges $ai$ with $i$ in $P_1$ have $i$ even: colour the new vertices on these subdivided edges so that there are at least $\ell$ edges with one endpoint of colour $a$ and the other of colour $i$ for each even $i \in \{1, \ldots, 2s\}$.  
The $2k$ subdivided edges $bi$ with $i$ in $P_1$ have $i$ odd: colour the new vertices on these subdivided edges so that there are at least $\ell$ edges with one endpoint of colour $b$ and the other of colour $i$ for each odd $i \in \{1, \ldots, 2s\}$.  
In this way we ensure that there are at least $\ell$ edges between all $a$-even and at least $\ell$ edges between all $b$-odd pairs of colour classes.  
The subdivided edges $ai$ with $i$ in $P_2$ have $i$ odd; subdivided edges $bi$ with $i$ in $P_2$ have $i$ even.  
Colouring the new vertices on these subdivided edges so that there are at least $\ell$ edges with one endpoint of colour $a$ and the other of colour $i$ for each odd $i \in \{1, \ldots, 2s\}$, and at least $\ell$ edges with one endpoint of colour $b$ and other other of colour $i$ for each even $i \in \{1, \ldots, 2s\}$ ensures that there are at least $\ell$ edges between all $a$-odd and all $b$-even pairs of colour classes.  
Remaining are pairs of colour classes of types even-even and odd-odd.  
There are $4k$ subdivided edges of the forms $ai$, $bi$ with $i$ in $P_3$ or $P_4$: colouring these new vertices so that every pair of integers in $\{1, \ldots, 2s\}$ of the same parity appear as endpoints of at least $\ell$ edges, we ensure that there are at least $\ell$ edges between all even-even and all odd-odd pairs of colour classes.  
Keeping in mind that we may take $k$ as large as necessary, this colouring is clearly possible.  
%
%Choose the colours these new colours for the vertices sharing a face with a vertex of $P_1$ we may arrange that there are at least $\ell$ edges with one end of colour $a$ (resp.\ colour $b$) and the other of colour $i$ for every even (odd) $i \in \{1,\ldots,2s\}$.  
%Similarly, by choosing these new colours for the vertices cofacial with a vertex of $P_2$ we may arrange that there are at least $\ell$ edges with one end of colour $a$ (resp.\ $b$) and the other of colour $i$ for every odd (even) $i \in \{1, \ldots, 2s\}$.  
%Finally, by carefully choosing these new colours for the vertices cofacial with a vertex of $P_3$ or $P_4$ we may arrange that every pair of integers in $\{1,\ldots,2s\}$ of the same parity appear on opposite ends of at least $\ell$ edges.  
The resulting coloured graph now has the desired properties.
\end{proof}

With this, we can easily prove our main result for this section.

\bigskip

\begin{proof}[Proof of Theorem \ref{complete-construction}:]
This follows from Theorem \ref{general-construction} and Lemma \ref{coloured-planar}.
\end{proof}
\bigskip

Our next theorem gives constructions for families of minor-minimal not group labellable biased graphs each of whose underlying simple graph is a cycle.  

\begin{theorem}
For every $k \ge 2$ and for $t=3$ and every $t \ge 5$ there exists a biased graph $(G, \mathcal{B})$ with the following properties:
\begin{itemize}
\item The underlying simple graph of $G$ is $C_t$.
\item If $u,v$ are adjacent vertices they are joined by exactly $2k$ edges.  
\item $(G, \mathcal{B})$ is not group-labellable.
\item For every infinite group $\Gamma$, every proper minor of $(G, \mathcal{B})$ is $\Gamma$-labellable.
\end{itemize}
\end{theorem}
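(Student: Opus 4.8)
The plan is to deduce the theorem from Theorem~\ref{general-construction} in the same way that Theorem~\ref{complete-construction} was deduced from it together with Lemma~\ref{coloured-planar}. Concretely, it suffices to produce, for each $k \ge 2$ and each $t \in \{3\}\cup\{5,6,7,\dots\}$, a simple planar graph equipped with a proper colouring by the colour set $\mathbb{Z}_t$ that satisfies hypotheses (1)--(3) of the Construction and, in addition, has the feature that every edge joins a vertex of colour $i$ to a vertex of colour $i+1$ for some $i \in \mathbb{Z}_t$, with exactly $2k$ edges for each $i$. If we have such a coloured planar graph (call it $G$, as in the Construction), then in its quotient $\widetilde{G}$ each colour class $i$ is joined only to the classes $i-1$ and $i+1$, by exactly $2k$ edges each; hence the underlying simple graph of $\widetilde{G}$ is $C_t$ with every adjacent pair of vertices joined by exactly $2k$ parallel edges. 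By Theorem~\ref{general-construction}, $(\widetilde{G},\mathcal{B})$ is then not group labellable, while $(\widetilde{G},\mathcal{B})\setminus e$ and $(\widetilde{G},\mathcal{B})/e$ are $\Gamma$-labellable for every edge $e$ and every infinite group $\Gamma$; since $\mathcal{G}_\Gamma$ is closed under taking minors, every proper minor of $(\widetilde{G},\mathcal{B})$ is $\Gamma$-labellable. So the entire task is the construction of the coloured planar graph $G$.

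When $t = 3$ this is easy: the graph $F_{2k}$ itself does the job, its two hubs coloured $0$ and its $2k$-cycle coloured alternately $1,2$. Its faces are the triangles $v\,c_j\,c_{j+1}$, each carrying all three colours, so hypothesis (2) holds; $F_{2k}$ is $3$-connected for $k \ge 2$, so hypothesis (1) holds; and every triangle of $F_{2k}$ is a face (the hubs are non-adjacent and the $2k$-cycle is chordless), while the $2k$-cycle itself and any cycle through both hubs have length at least $4 > t$, so hypothesis (3) holds automatically. For $t \ge 5$ the obvious analogue --- a double wheel on a $2k$-cycle with subdivided spokes --- fails hypothesis (3) for small $k$: the equator (possibly after subdivision) can have length less than $t$, and the two hubs together with two equator vertices span a $4$-cycle $v_1\,c_i\,v_2\,c_j$ that is not a face. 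In Lemma~\ref{coloured-planar} this never matters because $k$ may be taken arbitrarily large; here $k$ is prescribed, which is exactly the source of difficulty. The fix is to use the topology of the graphs $H_{2k}$: place the two hubs at the inner and outer ends of a long ``tube'' of $m$ nested cycles joined consecutively by perfect matchings, and then subdivide the cycle edges, the matching rungs, and the hub spokes by amounts chosen (as functions of $k$ and $t$) so that every face becomes a rainbow $t$-gon, each colour pair $\{i,i+1\}$ meets exactly $2k$ edges, and every cycle of length at most $t$ bounds a face. The parity of $t$ changes the bookkeeping (for $t$ odd, one of the subdivision parameters along the tube must alternate in parity from one nested cycle to the next), and $t=4$ has to be excluded: the graph one would need there is a simple $3$-connected quadrangulation of the sphere in which every face carries all four colours and each colour class meets exactly $2k$ edges, and no such graph exists.

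Granting the construction, hypotheses (1) and (2) and the edge count are routine to check: suppressing the degree-two (subdivision) vertices recovers an $H_{2k}$-type graph, which is $3$-connected for $k \ge 2$; the colouring is chosen precisely so that reading around any face lists each colour once; and the count of edges between colours $i$ and $i+1$ is a direct tally over the tube and the two hubs. The one delicate point --- and the main obstacle --- is hypothesis (3), that every cycle of length at most $t$ bounds a face. Three kinds of potentially short non-facial cycle have to be ruled out: a cycle running once around one of the nested cycles of the tube (controlled by subdividing cycle edges enough that every nested cycle has length $>t$), a cycle cutting across one band of the tube along two matching rungs (controlled by subdividing the rungs enough), and a cycle through a hub that is not the boundary of a hub-face (controlled by subdividing the hub spokes enough). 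The content of the proof is in checking that these three requirements, the requirement that every face have size exactly $t$, and the requirement of exactly $2k$ edges per colour pair, can all be satisfied simultaneously for every $k \ge 2$ and every $t \in \{3\}\cup\{5,6,7,\dots\}$; carrying this out, with the parity of $t$ handled separately, is where the real work lies.
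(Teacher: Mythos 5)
Your reduction is the same as the paper's: build a simple planar graph, properly coloured by $\mathbb{Z}_t$, in which every face is a rainbow $t$-gon with the colours in cyclic order $0,1,\dots,t-1$ (or its reverse) and with exactly $2k$ edges between each pair of consecutive colour classes, then quotient and invoke Theorem~\ref{general-construction}; your $t=3$ case (recolour the two hubs of $F_{2k}$) is exactly what the paper does. But for $t\ge 5$ you have not actually produced the construction --- you describe a tube of nested cycles in the spirit of $H_{2k}$ with subdivision amounts ``chosen (as functions of $k$ and $t$)'' and then state explicitly that verifying the simultaneous satisfiability of all the constraints ``is where the real work lies.'' That verification is the theorem; deferring it is a genuine gap, not a routine omission. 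Worse, the proposed topology runs into a counting obstruction you never address: since every face has size exactly $t$ and each consecutive colour pair carries exactly $2k$ edges, the graph has $2kt$ edges and hence exactly $4k$ faces. For $k=2$ (which the theorem requires) that is $8$ faces in total, so a ``long tube'' of nested cycles is impossible --- with two hubs of degree at least $3$ and at least two rungs per band one gets at most two nested cycles --- and the structure degenerates to something of the same size as the double wheel you rejected. So precisely in the small-$k$ regime that motivated your switch away from the double wheel, your replacement offers no escape from the short non-facial cycle problem, and no argument is given there.

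For comparison, the paper does not abandon the double wheel: it recolours $F_{2k}$ with hub colour $s+p$ and rim colours $0$ and $2s+p+q$ (where $t=3s+p+q$, $0\le p\le q\le 1$) and subdivides the three sides of each triangle into paths of lengths $s$, $s+p$, $s+q$, so every face is a $t$-gon and every non-facial cycle passes through at least four branch vertices, hence has length at least $4\lfloor t/3\rfloor>t$ for all $t\ge 5$ except when $(t,k)\in\{(5,2),(8,2)\}$; those two sporadic cases are handled by the explicit graphs of Figure~\ref{fig:special_oct}. Your blanket claim that the subdivided double wheel ``fails hypothesis (3) for small $k$'' is therefore overstated --- with the paper's choice of subdivision it fails only at those two parameter pairs --- and some such sporadic analysis (or explicit replacement graphs) is unavoidable in any correct proof, since with only $8$ faces one cannot make all non-facial cycles long by structural generalities alone. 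To repair your argument you would need either to carry out the tube construction in full with the $4k$-face constraint respected (and handle $k=2$ separately anyway), or to adopt the paper's subdivided-$F_{2k}$ route together with ad hoc graphs for the exceptional pairs.
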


\begin{proof} As in the previous theorem we will construct certain coloured planar graphs and then call upon Theorem \ref{general-construction}.  The graphs we construct have $t$-colourings using the colours $\{0,1,\ldots,t-1\}$ with the following properties:
\begin{itemize}
\item On each face the cyclic ordering of colours is given by either $0,1,\ldots,t-1$ or its reverse.
\item There are exactly $4k$ faces.
\end{itemize} 
Note that the above two properties guarantee that the graph $\widetilde{G}$ obtained from the identification process in our construction will satisfy the first and second properties of the theorem.  When $t=3$ we may obtain such a graph from $F_{2k}$ by changing the two vertices coloured $a$ to colour $2$.  So, we may assume $t \ge 5$.  When $(t,k) \in \{(5,2), (8,2)\}$ the graphs depicted in Figure \ref{fig:special_oct} satisfy the desired properties.

\begin{figure}[h]
  \centering
  \includegraphics[scale=0.9]{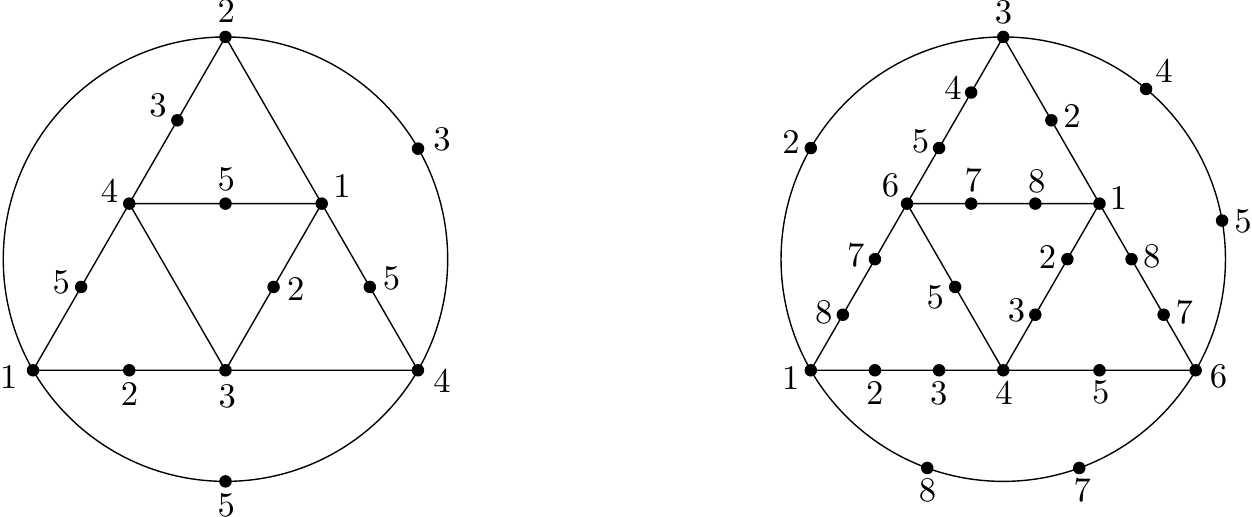}
  \caption{}
  \label{fig:special_oct}
\end{figure}

Thus, we may assume $(t,k) \notin \{(5,2), (8,2)\}$.  Let $t = 3s+p+q$ where $0 \le p \le q \le 1$.  Modify $F_{2k}$ by changing every vertex of colour $a$ to colour $s+p$ and every vertex of colour $1$ to colour $2s+p+q$.  Now subdivide every edge with ends of colours $0$ and $s+p$ exactly $s+p-1$ times, every edge with ends of colours $s+p$ and $2s+p+q$ exactly $s+q-1$ times and every edge with ends of colours $0$ and $2s+p+q$ exactly $s-1$ times.  Now we may colour the vertices of degree two so that around every face they are ordered cyclicly in either clockwise or counterclockwise direction as $0,1,\ldots,3s+p+q-1$.  

\begin{figure}[h]
  \centering
  \includegraphics[scale=1]{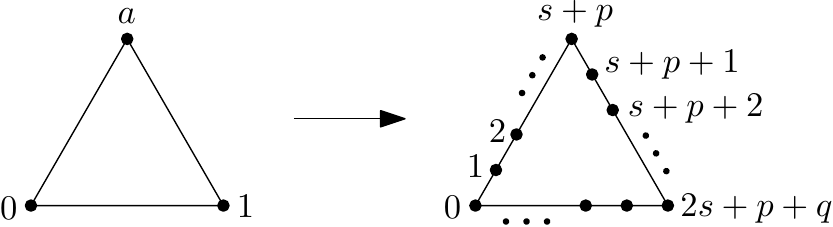}
  \caption{}
  \label{fig:triangle-change}
\end{figure}

In this case each triangle is subdivided as in Figure \ref{fig:triangle-change}.  It follows immediately from our construction that the graphs we have constructed are subdivisions of 3-connected planar graphs with exactly $4k$ faces all coloured as in the figure.  So to complete the proof we need only verify that these graphs have the property that every cycle of size $\le t$ is the boundary of a face.  Observe that every cycle that is not a facial boundary contains at least four vertices of degree $\ge 3$ so will have total length at least $4 \lfloor \frac{t}{3} \rfloor$, which is greater than $t$ for all $t \ge 6$ except for $t=8$.  In the cases when $t=5$ and $t=8$ we have the additional assumption $k \ge 3$ and it is easy to check that any cycle that is not a facial boundary has length at least $6$ if $t=5$ and at least $9$ if $t=8$. So again here we have the desired property.  
\end{proof}

\section{Excluded Minors - Matroids} \label{sec:Mat_Ex_Minors} 

In this section we will call upon our prior results to construct some excluded minors for families of frame and lift matroids.  
We have already shown numerous families of biased graphs which are minor minimal subject to not being $\Gamma$-labellable.  However, this does not immediately give us excluded minors for the classes $\mathcal{L}_{\Gamma}$ and $\mathcal{F}_{\Gamma}$ since there might exist two biased graphs with the same frame (lift) matroid where one is $\Gamma$-labellable and the other is not.  In order to show that we do have excluded minors for these classes, we need to handle this issue of non-unique representations.  

To assist in this exploration, we begin by looking at biased graphs which have associated matroids isomorphic to $U_{2,m}$.  Define $K_2^m$ to be a two-vertex graph consisting of $m$ edges in parallel, let $K_2^{m+}$ be a graph obtained from $K_2^{m}$ by adding a single loop edge, and let $K_2^{m++}$ be a graph obtained from $K_2^{m+}$ by adding another loop not adjacent to the first loop.  

\begin{observation}
For a biased graph $(G,\mathcal{B})$ and $m \ge 4$ we have:
\begin{enumerate}
\item $L(G,\mathcal{B}) \cong U_{2,m}$ if and only if $(G,\mathcal{B})$ is isomorphic to $(K_2^m,\emptyset)$ or $(K_2^{(m-1)+},\emptyset)$.  
\item $F(G,\mathcal{B}) \cong U_{2,m}$ if and only if $(G,\mathcal{B})$ is isomorphic to $(K_2^m,\emptyset)$,  $(K_2^{(m-1)+},\emptyset)$, or 
$K_2^{(m-2)++},\emptyset)$.
\end{enumerate}
\end{observation}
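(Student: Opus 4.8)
The plan is to check the ``if'' direction by a direct computation from the circuit descriptions above, and to obtain the ``only if'' direction by pinning down the structure of any biased graph whose frame (resp.\ lift) matroid is $U_{2,m}$. For the easy direction one verifies that in each of $(K_2^m,\emptyset)$, $(K_2^{(m-1)+},\emptyset)$ and $(K_2^{(m-2)++},\emptyset)$ every $2$-element subset of the edge set is independent and every $3$-element subset is a circuit, so the matroid has rank $2$ and equals $U_{2,m}$; the one point to note is that in the lift matroid an unbalanced loop at one vertex together with an unbalanced loop at the other is already a $2$-element circuit, which is exactly why $K_2^{(m-2)++}$ occurs in part (2) but not in part (1).

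For the converse, fix $(G,\mathcal B)$ with, say, $F(G,\mathcal B)\cong U_{2,m}$ and $m\ge 4$; the lift case runs identically. Since $U_{2,m}$ is loopless, $G$ has no balanced loop (a balanced loop is a matroid loop); since $U_{2,m}$ is simple, $G$ has no $2$-element matroid circuit; and since $\rank U_{2,m}=2$, every $3$-element subset of $E(G)$ is a circuit and every $2$-element subset is independent. First I would locate $G$. Every circuit of $F$ spans a connected subgraph, and $E(G)$ has a $3$-element subset, which must be a (connected) circuit; hence no edge lies in a component disjoint from another edge, and after deleting isolated vertices $G$ is connected. A spanning tree of $G$ has $|V(G)|-1$ edges and, being a forest, is independent, so $|V(G)|-1\le\rank U_{2,m}=2$. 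The case $|V(G)|=1$ is impossible, since then all edges are unbalanced loops and any two form a $2$-element circuit while $m\ge 4$. To rule out $|V(G)|=3$: a spanning tree is then a path with edges $f_1,f_2$, necessarily a basis, so $\{f_1,f_2,e\}$ is a circuit for every remaining edge $e$; as every circuit has minimum degree $\ge 2$, this excludes $e$ being a loop or an edge parallel to $f_1$ or $f_2$ (each would leave a degree-one vertex), so every non-tree edge joins the two ends of the path. Taking two such edges $e_1,e_2$, the subgraph with edges $f_1,f_2,e_1,e_2$ is a theta two of whose cycles, $\{f_1,f_2,e_1\}$ and $\{f_1,f_2,e_2\}$, are balanced; the theta property forces the third cycle, the digon $\{e_1,e_2\}$, to be balanced, i.e.\ a $2$-element circuit — a contradiction. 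Hence $|V(G)|=2$.

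With $V(G)=\{u,v\}$ and $G$ connected, the only cycles of $G$ are loops and digons; no loop is balanced, and no digon is balanced (a balanced digon would be a $2$-element circuit), so $\mathcal B=\emptyset$. All loops are unbalanced; two unbalanced loops at the same vertex form a $2$-element circuit in both $F$ and $L$, so each of $u$ and $v$ carries at most one loop, and in the lift matroid an unbalanced loop at $u$ together with one at $v$ is again a $2$-element circuit, so in that case at most one of $u,v$ carries a loop. Reading off the cases — no loop, one loop, or (for $F$ only) one loop at each vertex — gives precisely $K_2^m$, $K_2^{(m-1)+}$ and $K_2^{(m-2)++}$, which is the asserted list.

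I expect the case $|V(G)|=3$ to be the main obstacle: the work is in converting the featureless hypothesis ``every triple containing a fixed basis is a circuit'' into concrete graph structure, and the levers are the minimum-degree-$2$ property of circuits of $F$ and $L$ and the concluding use of the theta property. A secondary subtlety, affecting part (1), is that lift-matroid circuits need not span connected subgraphs (two vertex-disjoint unbalanced cycles form a circuit of $L$); in establishing connectedness in the lift case one instead uses that detaching an unbalanced loop onto a new vertex does not change $L$, so that such a biased graph is identified with $K_2^{(m-1)+}$. All remaining steps are a routine unravelling of the circuit definitions.
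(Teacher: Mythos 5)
The paper offers no argument for this statement (it is left as an unproved Observation), so there is no ``paper proof'' to compare against; judged on its own, your verification is the natural one and is essentially correct. The forward direction is a routine check, and your converse argument for the frame matroid is complete: no balanced loops, no $2$-element circuits, connectivity of frame circuits forces $G$ connected, a spanning forest is independent so $|V(G)|\le 3$, the one-vertex and three-vertex cases are killed exactly as you say (the three-vertex case via minimum degree $2$ of circuits plus the theta property applied to $\{f_1,f_2,e_1,e_2\}$), and the two-vertex case reads off the three listed graphs.

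The one thin spot is the lift case, and you have correctly located it: lift circuits need not be connected, so connectivity of $G$ does not follow as it does for $F$. Your remark that relocating an unbalanced loop to a new vertex does not change $L$ is the right fact, but as written it is an identification, not a classification: to finish, you should show that a disconnected example can only be the graph consisting of $m-1$ parallel edges (all digons unbalanced) together with a single unbalanced loop on a separate vertex. This is quick with your tools: in $L$ any two loops form a circuit of size at most $2$ (same vertex or not), so $G$ has at most one loop; if $e,f$ lie in distinct components then for every third edge $g$ the circuit $\{e,f,g\}$ is disconnected, hence is a vertex-disjoint unbalanced loop plus unbalanced digon, and since there are at least two choices of $g$ but at most one loop in $G$, one of $e,f$ is the loop and every other edge is parallel to the other, giving exactly the configuration above. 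Note, finally, that this configuration is \emph{not} isomorphic to $(K_2^{(m-1)+},\emptyset)$ as a biased graph even though its lift matroid is $U_{2,m}$; so read literally, the ``only if'' of part (1) requires either a connectedness convention or this extra graph in the list (which is harmless for the paper's later use, since it has the same lift matroid as $K_2^{(m-1)+}$). Making that caveat explicit, rather than the phrase ``is identified with,'' is all that separates your sketch from a complete proof.
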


Let us call a biased graph $(G, \mathcal{B})$ \emph{lift-unique} (resp.\ \emph{frame-unique}) if the only biased graphs with lift (resp.\ frame) matroid isomorphic to $L(G, \mathcal{B})$ (resp.\ $F(G, \mathcal{B})$) are obtained from $(G, \mathcal{B})$ by renaming the vertices.  

\begin{lemma}
\label{lift-frame-unique}
Let $(G,\mathcal{B})$ be a loopless biased graph on $n \ge 3$ vertices for which every pair of vertices are joined by at least four edges, and all cycles of length two are unbalanced.  Then $(G,\mathcal{B})$ is both frame-unique and lift-unique.
\end{lemma}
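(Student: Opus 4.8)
The plan is to work entirely on the matroid side. Write $M=F(G,\mathcal{B})$; the lift case $M=L(G,\mathcal{B})$ is parallel, and I flag below the one point where the two arguments differ. Since $G$ is loopless and any two of the $\ge 4$ edges between a pair of vertices form an unbalanced $2$-cycle, $M$ is a simple matroid of rank $n$ (Zaslavsky's rank formula gives $\rank(E(G))=n$). The whole argument rests on a description of the \emph{long lines} of $M$, by which I mean its rank-$2$ flats with at least four points.

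First I would prove: for any loopless biased graph $(G',\mathcal{B}')$ all of whose $2$-cycles are unbalanced, the long lines of $F(G',\mathcal{B}')$ and of $L(G',\mathcal{B}')$ are exactly the parallel classes of edges of size $\ge 4$. Such matroids are simple, so for a rank-$2$ flat $S$ one has $\mathrm{cl}(\{e,f\})=S$ for every pair of distinct $e,f\in S$; a short case analysis of the relative position of $e$ and $f$ (using Zaslavsky's rank formulas $\rank_F(\cdot)=|V(\cdot)|-b(\cdot)$ and $\rank_L(\cdot)=|V(\cdot)|-c(\cdot)+\varepsilon(\cdot)$) shows that $|\mathrm{cl}(\{e,f\})|\le 3$ unless $e,f$ are parallel, in which case $\mathrm{cl}(\{e,f\})$ is their parallel class. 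Here the subcase in which $e$ and $f$ share one vertex uses the theta property: two distinct edges completing a balanced triangle on the path $ef$ would, together with $e$ and $f$, form a theta subgraph with exactly two balanced cycles. Hence a long line is a parallel class of size $\ge 4$. Applied to $(G,\mathcal{B})$ itself, this says the long lines of $M$ are the $\binom{n}{2}$ parallel classes $E_{uv}$ ($u\neq v$), and they partition $E(M)$.

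Now let $(H,\mathcal{C})$ be any biased graph with $F(H,\mathcal{C})\cong M$; identify $E(H)=E(M)$. Since $M$ is simple, $(H,\mathcal{C})$ is loopless with all $2$-cycles unbalanced, so by the previous paragraph its parallel classes of size $\ge 4$ are precisely the long lines of $M$; as these partition $E(H)$, every parallel class of $H$ has size $\ge 4$, so the simple graph $\widehat{H}$ underlying $H$ has exactly $\binom{n}{2}$ edges. Discard isolated vertices and set $m=|V(H)|$. In the frame case every component of $H$ contains a parallel pair, hence an unbalanced cycle, so $\rank F(H,\mathcal{C})=m$; as $\rank(M)=n$ this gives $m=n$ and therefore $\widehat{H}=K_n$. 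In the lift case $\rank L(H,\mathcal{C})=m-c+1=n$ (again all components unbalanced), so $\sum_i(m_i-1)=n-1$ over the components $\widehat{H}_i$; combining this with $\sum_i|E(\widehat{H}_i)|=\binom{n}{2}$, the bound $|E(\widehat{H}_i)|-(m_i-1)\le\binom{m_i-1}{2}$, and superadditivity of $\binom{\cdot}{2}$, forces a single nontrivial component equal to $K_n$. Either way $H$ is, up to isolated vertices, a biased graph on $n$ vertices satisfying exactly the hypotheses of the lemma, and the long lines of $M$ are indexed both by $E(K_n)$ on vertex set $V(G)$ and by $E(K_n)$ on vertex set $V(H)$.

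Finally, a matroid isomorphism $\psi$ permutes the long lines and preserves ranks and closures of families of them; in particular it preserves the relation "$E_{uv}$ and $E_{u'v'}$ share a vertex" (detected by $\rank(E_{uv}\cup E_{u'v'})=3$ in the frame case, and by $\mathrm{cl}(E_{uv}\cup E_{u'v'})$ containing a third long line in the lift case, since the naive rank test fails there) and the relation "three long lines form a triangle" (their union has rank $3$). A bijection of $E(K_n)$ respecting adjacency, together with the triangle relation when $n=4$ (needed only to exclude the exceptional automorphisms of $L(K_4)\cong K_{2,2,2}$; $n\ge 5$ is Whitney's theorem on line graphs of complete graphs, and $n=3$ is trivial), is induced by a vertex bijection $\sigma\colon V(G)\to V(H)$, so $\psi$ realises a graph isomorphism $G\to H$. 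It remains to see that $\psi$ respects balance: a cycle $D$ of $G$ of length $k$ lies in $\mathcal{B}$ iff it is a circuit of $M$ (balanced cycles are circuits by definition, while an unbalanced cycle has rank $k$ in both $F$ and $L$, hence is independent), and the same holds in $H$, so $D\in\mathcal{B}\iff\psi(D)\in\mathcal{C}$. Hence $\psi$ is an isomorphism $(G,\mathcal{B})\to(H,\mathcal{C})$, establishing both frame- and lift-uniqueness. I expect the main obstacle to be Step~3 — showing that an arbitrary representation $(H,\mathcal{C})$ of $M$ inherits the full structure of $(G,\mathcal{B})$ — and in particular the lift-matroid subcase, where the extra global $+1$ in the rank formula means one must control $\widehat{H}$ through edge counting rather than through ranks of unions of long lines.
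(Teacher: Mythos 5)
Your overall strategy is essentially the paper's: identify the parallel classes of any representation through rank-$2$ substructure (your ``long lines'' play the role of the paper's $U_{2,4}$-equivalence classes), recover $K_n$ and a vertex bijection via a Whitney/line-graph argument with the $n=4$ case handled through triangles, and then read off balance from circuits. Most of that is fine, and your explicit treatment of adjacency detection in the lift case and of the balance of cycles is if anything more careful than the paper's.

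However, there is a genuine gap at the sentence ``Since $M$ is simple, $(H,\mathcal{C})$ is loopless with all $2$-cycles unbalanced.'' Simplicity of $M$ only excludes balanced loops (matroid loops) and balanced $2$-cycles (matroid parallel pairs); it does not exclude unbalanced loops, which are rank-$1$ non-loop elements of both the frame and the lift matroid. The paper's own Observation is a counterexample to your implication: $U_{2,m}$ is simple, yet it is the frame matroid of $K_2^{(m-1)+}$ and of $K_2^{(m-2)++}$, both of which have loops. This matters because your Step~1 (long lines are exactly parallel classes of size $\ge 4$) was proved only for loopless biased graphs with unbalanced $2$-cycles, so you cannot apply it to $H$; without it, a long line of $M$ realised in $H$ could a priori be a parallel class augmented by unbalanced loops at its two endpoints, and your edge count $|E(\widehat H)|=\binom n2$, the rank/vertex count, and the identification $\widehat H = K_n$ all rest on the excluded possibility. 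The paper does real work exactly here: it records the three possible two-vertex realisations $K_2^m$, $K_2^{(m-1)+}$, $K_2^{(m-2)++}$ of a $U_{2,m}$-restriction and then rules out loops in the other representation by showing that a loop at a vertex $v$, together with an unbalanced $2$-cycle through $v$ from each of two distinct classes, would force that loop to lie in two distinct classes $E_i$ and $E_j$, contradicting that these classes are disjoint. A similar short argument (a loop lies in the closure of any unbalanced $2$-cycle it meets in at most one vertex, hence in two distinct long lines, contradicting that the long lines partition $E(M)$) would patch your proof, but it is a needed additional step, not a consequence of simplicity as you assert.
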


\begin{proof} We begin by considering $F(G, \mathcal{B})$.  Let $E = E(G)$ and define a relation $\sim$ on $E$ by the rule that $e \sim f$ if there exists a restriction of the frame matroid isomorphic to $U_{2,4}$ which contains both $e$ and $f$.  It follows easily from the description of $(G, \mathcal{B})$ that $\sim$ is an equivalence relation and its equivalence classes are precisely the parallel classes of $G$, which we denote by $E_1, E_2, \ldots, E_{ {n \choose 2} }$.  

Suppose that $(G', \mathcal{B}')$ is another biased graph on the same edge set with the same frame matroid; {\it i.e.}, $F(G', \mathcal{B}') \cong F(G, \mathcal{B})$.  If $|E_i| = m$ then the restriction of our matroid to $E_i$ is isomorphic to $U_{2,m}$ and thus in the graph $G'$, the edges in $E_i$ induce a two vertex subgraph isomorphic to one of $K_2^m$, $K_2^{(m-1)+}$ or $K_2^{(m-2)++}$.  It follows from the fact that $\sim$ is an equivalence relation that for $i \neq j$ the edge sets $E_i$ and $E_j$ induce graphs on distinct two-vertex sets.  Next suppose (for a contradiction) that there exists a loop edge $e$ in $G'$ incident with the vertex $v$.  Let $e',e''$ be non-loop edges incident with $v$ which are not in parallel.  Then $e' \in E_i$ and $e'' \in E_j$ for some $i \neq j$. Therefore $e$ is in both $E_i$ and $E_j$, a contradiction.
%Then $e$ may be extended to a subgraph isomorphic to either $K_2^{3+}$ or $K_2^{2++}$ which contains $e'$ and another which contains $e''$.  This gives us the contradiction $e \sim e'$ and $e \sim e''$ but $e' \not\sim e''$.  
Thus the graph $G'$ is loopless, and $E_1, \ldots, E_{ {n \choose 2} }$ are also its parallel classes.  

Let $e,f,g$ be three edges which form a triangle in $G$ and let $e'$ be parallel with $e$.  Then one of $\{e,f,g\}$, $\{e',f,g\}$, $\{e,e',f,g\}$ is a circuit in $F(G, \mathcal{B})$.  It follows from this, and the fact that $G'$ is loopless with the same parallel classes as $G$, that the edges $e,f,g$ must also form a triangle in $G'$.  
In particular, this implies that two edges $e,f$ are adjacent in $G$ if and only if they are adjacent in $G'$.  
Therefore, the line graphs of $G$ and $G'$ are isomorphic.  For $n \ge 5$ the maximum cliques in the line graph of $K_n$ correspond precisely to sets of edges incident with a common vertex, and it follows that for $n \ge 5$ the biased graph $(G',\mathcal{B}')$ may be obtained from $(G,\mathcal{B})$ by renaming the vertices.  For $n=3$ there is also nothing left to prove, so we are left with the case $n=4$.  The maximum cliques of the line graph of $K_4$ are given by either triangles or sets of edges incident with a common vertex.  Since three edges form a triangle in $G$ if and only if they form a triangle in $G'$ we conclude that again in this case, the biased graph $(G',\mathcal{B}')$ may be obtained from $(G,\mathcal{B})$ by renaming the vertices.  We conclude that $(G, \mathcal{B})$ is frame-unique.  

For lift matroids the same proof applies with the only difference being that the two vertex subgraph induced by $E_i$ cannot be $K_2^{(m-2)++}$.  
\end{proof}

\bigskip

\begin{proof}[Proof of Theorem \ref{matroid-minors}:] 
Fix $t \ge 3$ and let $\Gamma$ be an infinite group.  
By Theorem \ref{complete-construction} we may choose an infinite set of biased graphs on $t$ vertices $\{ (G_i, \mathcal{B}_i) \mid i \in \{ 1,2,\ldots \} \}$ with $|E(G_{i+1})| > |E(G_i)|$ so that every pair of vertices is joined by at least 4 edges in every $G_i$, every $(G_i, \mathcal{B}_i)$ is not $\Gamma$-labellable, and every proper minor of $(G_i, \mathcal{B}_i)$ is $\Gamma$-labellable.  
Moreover, by the constructions used in the proof of Theorem \ref{complete-construction} we may assume each $(G_i, \mathcal{B}_i)$ is loopless.  
By the previous lemma, each $(G_i, \mathcal{B}_i)$ is both frame-unique and lift-unique.  
We conclude $F(G_i,\mathcal{B}_i) \not\in \mathcal{F}_{\Gamma}$ and $L(G_i,\mathcal{B}_i) \not\in \mathcal{L}_{\Gamma}$.  
Since none of the graphs $(G_i, \mathcal{B}_i)$ have an unbalanced loop, in each of them any single minor operation agrees with the corresponding operation in $L(G_i, \mathcal{B}_i)$ and in $F(G_i, \mathcal{B}_i)$.  
Since $\rank(F(G, \mathcal{B})) = \rank(L(G, \mathcal{B})) = |V(G)|$, it follows that the lift (resp.\ frame) matroid of every $(G_i, \mathcal{B}_i)$ is an excluded minor of rank $t$ for the class $\mathcal{L}_{\Gamma}$ (resp.\ $\mathcal{F}_{\Gamma}$).  
\end{proof}

\bigskip

\section{Infinite Antichains} \label{sec:infinite_antichains}

In the %world of biased graphs, we have now constructed 
proof of Theorem \ref{matroid-minors}, we constructed infinite antichains of biased graphs on a bounded number of vertices by finding biased graphs which are minor minimal subject to not being group labellable.  In this section we prove that for every infinite group $\Gamma$, there also exist infinite antichains of %biased graphs on a bounded number of vertices in the family $\mathcal{G}_{\Gamma}$.  
$\Gamma$-labellable graphs.  
We also show that these results extend to give infinite antichains of bounded rank in the families of matroids $\mathcal{L}_{\Gamma}$ and $\mathcal{F}_{\Gamma}$.  We begin by noting that the biased graphs $( 2C_n, \mathcal{B}_n )$ of Observation \ref{obs:2C_n_antichain} are group labellable.  

\begin{observation} \label{obs:2C_n_grouplabellable} 
For every infinite group $\Gamma$ and every $n \ge 2$ the biased graph $(2C_n, \mathcal{B}_n)$ is $\Gamma$-labellable.
\end{observation}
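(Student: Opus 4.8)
The plan is to write down an explicit $\Gamma$-labelling of $(2C_n,\mathcal B_n)$. Label the vertices $v_0,v_1,\ldots,v_{n-1}$, and for each $i$ (indices read modulo $n$) let $\{e_i,f_i\}$ be the two parallel edges joining $v_i$ to $v_{i+1}$, all oriented from $v_i$ to $v_{i+1}$; take $\mathcal B_n=\{C_e,C_f\}$ where $C_e=\{e_0,\ldots,e_{n-1}\}$ and $C_f=\{f_0,\ldots,f_{n-1}\}$. First I would record the easy structural fact that every cycle of $2C_n$ is either a \emph{transversal} — a cycle using exactly one of $e_i,f_i$ for each $i$ and no other edges — or one of the $n$ \emph{digons} $\{e_i,f_i\}$: at any vertex $v_i$ lying on a cycle, the two edges used there lie among $e_{i-1},f_{i-1},e_i,f_i$, and using both edges of a single slot forces the cycle to be that digon, while using one edge toward each neighbour at every vertex forces a transversal. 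Then I would set $\phi(e_i)=1$ for every $i$ and $\phi(f_i)=c_i$, where $c_0,\ldots,c_{n-1}\in\Gamma$ are still to be chosen, subject only to $c_0c_1\cdots c_{n-1}=1$.

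With this definition $C_e$ and $C_f$ are balanced; the transversal using $f_i$ for $i$ in a set $B$ and $e_i$ otherwise has $\phi$-value $\prod_{i\in B}c_i$ (the product taken in increasing order of index), while the digon $\{e_i,f_i\}$ has $\phi$-value $c_i^{-1}$. Hence $\mathcal B_\phi=\mathcal B_n$ precisely when
\[
(\star)\qquad \prod_{i\in B}c_i\neq 1 \text{ for every set }B\text{ with } \emptyset\subsetneq B\subsetneq\{0,1,\ldots,n-1\}.
\]
To produce such $c_i$ I would first choose $c_0,\ldots,c_{n-2}$ greedily so that the $2^{n-1}$ products $\prod_{i\in S}c_i$ over $S\subseteq\{0,\ldots,n-2\}$ are pairwise distinct: once $c_0,\ldots,c_{k-1}$ with this property are in hand, $k$ is the largest index, so every enlarged product involving $c_k$ has the form $\big(\prod_{i\in S}c_i\big)c_k$ with $S\subseteq\{0,\ldots,k-1\}$, and the enlarged family is again pairwise distinct as soon as $c_k$ avoids the finitely many values $\big(\prod_{i\in S}c_i\big)^{-1}\big(\prod_{i\in S'}c_i\big)$ with $S,S'\subseteq\{0,\ldots,k-1\}$; this is possible because $\Gamma$ is infinite. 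Finally set $c_{n-1}=(c_0c_1\cdots c_{n-2})^{-1}$. Then $(\star)$ holds: if $n-1\notin B$, then $\prod_{i\in B}c_i$ is one of the distinct products over the nonempty set $S=B$, hence $\neq 1$; if $n-1\in B$, then $\prod_{i\in B}c_i=\big(\prod_{i\in B\setminus\{n-1\}}c_i\big)(c_0\cdots c_{n-2})^{-1}$, which equals $1$ only if $\prod_{i\in B\setminus\{n-1\}}c_i=c_0\cdots c_{n-2}$, and this is impossible since $B\setminus\{n-1\}$ is a proper subset of $\{0,\ldots,n-2\}$ (because $B$ is proper) and the products are distinct.

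The one genuine point is this final existence statement. It is tempting to exhibit a $\mathbb Z$-labelling (for instance $c_0=\cdots=c_{n-2}=1$, $c_{n-1}=1-n$) and transport it into $\Gamma$, but that only works when $\Gamma$ has an element of infinite order, whereas an infinite group need not — for example $\bigoplus_{i\ge 1}\mathbb Z/2\mathbb Z$. The greedy choice above sidesteps this, using only that an infinite group is not a finite union of singletons; everything else is the bookkeeping about transversals and digons described in the first paragraph.
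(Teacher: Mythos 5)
Your proof is correct and follows essentially the same route as the paper's: label one of the two balanced cycles entirely with the identity, label the other with a greedily chosen generic sequence whose last element is the inverse of the product of the rest, and check that the only balanced cycles are the two Hamiltonian transversals in $\mathcal B_n$. Your genericity condition (all products $\prod_{i\in S}c_i$ over $S\subseteq\{0,\dots,n-2\}$ pairwise distinct) is a slightly stronger and more carefully stated version of the paper's ``no subsequence has product equal to $1$,'' and it has the merit of handling the transversals through the last edge cleanly even when $\Gamma$ is nonabelian.
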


\begin{proof}
Orient the edges so that each of the two balanced cycles is a directed cycle, and label all edges in the first balanced cycle with $1$.  Let $e_1, \ldots, e_n$ be the edges of the second balanced cycle in order.  Now choose a sequence of group elements $g_1, \ldots, g_{n-1}$ so that no subsequence of these elements has product equal to $1$ (this may be done greedily).  Assign $e_i$ the label $g_i$ for $1 \le i \le n-1$ and assign $e_n$ the label $g_{n-1}^{-1} \ldots g_1^{-1}$.  
%It follows easily that $(2C_n, \mathcal{B}_n)$ is the biased graph associated with this group labelling, as desired.
This $\Gamma$-labelling realises $\mathcal{B}_n$.   
\end{proof}

Together, Observations \ref{obs:2C_n_antichain} and \ref{obs:2C_n_grouplabellable} exhibit, for every infinite group $\Gamma$, an infinite antichain of biased graphs in $\mathcal{G}_{\Gamma}$.  We now show (using an argument very similar to that in the proof of Theorem \ref{general-construction}) that there are also such antichains having all members on a bounded number of vertices. 

\begin{lemma}
\label{inf-anti-bias}
For every infinite group $\Gamma$ and every $t \ge 3$ there exists an infinite antichain %in $\mathcal{G}_{\Gamma}$ of biased graphs 
of $\Gamma$-labelled graphs 
on $t$ vertices.
\end{lemma}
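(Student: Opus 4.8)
The plan is to mimic the construction underlying Theorem~\ref{general-construction}, but instead of producing biased graphs that are minor-minimal \emph{not} group labellable, to produce a family that is group labellable while still forming an antichain on a fixed vertex set. The natural candidates are the biased graphs $(\widetilde{G}_n, \mathcal{B}_n)$ obtained from a sequence of $t$-coloured planar graphs $G_n$ as in the Construction, but where we \emph{also} include in $\mathcal{B}_n$ the boundary of the infinite face — so that every cycle corresponding to a face boundary (finite or infinite) is declared balanced, and the resulting 2-cell complex is a sphere. Since the complex is simply connected, Theorem~\ref{main1} immediately gives that $(\widetilde{G}_n, \mathcal{B}_n)$ is group labellable; in fact, one can write down a $\Gamma$-labelling directly by assigning group elements $g_v$ to the $t$ colour classes and labelling each edge $v_iv_j$ by $g_{v_i}^{-1}g_{v_j}$, exactly as in the ``Deletion'' case of the proof of Theorem~\ref{general-construction} when $e$ is on the infinite face. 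For concreteness I would take $G_n$ to be (a suitably coloured) copy of the graphs $F_{2n}$ or $H_{2n}$ from Section~\ref{sec:construction} (choosing the parity of $t$ appropriately and using Lemma~\ref{coloured-planar} if more parallel edges are needed), so that $\widetilde{G}_n$ has underlying simple graph a fixed complete graph $K_t$ but with the number of parallel edges (equivalently $|\mathcal{B}_n|$, roughly the number of faces) growing with $n$.

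The core of the argument is then to show $\{(\widetilde{G}_n, \mathcal{B}_n) : n \ge n_0\}$ is an antichain under the minor order. Here I would exploit the same rigidity that drives the Construction: every cycle of $G_n$ of length $\le t$ is a face boundary, so every balanced cycle of $\widetilde{G}_n$ is a Hamiltonian cycle, and two distinct members of $\mathcal{B}_n$ can share all but at most one pair of parallel edges. The key structural claim is that any \emph{proper} minor of $(\widetilde{G}_n, \mathcal{B}_n)$ has strictly fewer than $|\mathcal{B}_n|$ balanced cycles, and moreover that deleting or contracting an edge from $(\widetilde{G}_m, \mathcal{B}_m)$ can never produce something isomorphic to $(\widetilde{G}_n, \mathcal{B}_n)$ for $n \ne m$ — for instance because contracting a non-loop edge destroys the property that every balanced cycle is Hamiltonian on the full vertex set, while deleting an edge either strictly decreases the number of balanced cycles or decreases the edge count without restoring the ``all balanced cycles Hamiltonian'' structure. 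Combined with the fact that $|E(\widetilde{G}_n)|$ (and $|\mathcal{B}_n|$) is strictly increasing in $n$, and that on $t$ vertices a genuine proper minor has either fewer vertices or fewer edges, this rules out $(\widetilde{G}_n,\mathcal{B}_n)$ being a minor of $(\widetilde{G}_m,\mathcal{B}_m)$ in either direction when $n\ne m$.

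Concretely the steps are: (1) fix $t$, construct the coloured planar graphs $G_n$ with the required number of parallel classes using $F_{2k}$/$H_{2k}$ and Lemma~\ref{coloured-planar}; (2) form $\widetilde{G}_n$ by the identification process and let $\mathcal{B}_n$ be \emph{all} face boundaries (including the infinite face), checking the theta property exactly as in the Construction; (3) exhibit the explicit $\Gamma$-labelling $\phi(v_iv_j) = g_{v_i}^{-1}g_{v_j}$ and verify, by the now-familiar ``longest generator'' word-length argument (using greedily chosen $g_v$ with no short relations), that $\mathcal{B}_\phi = \mathcal{B}_n$, so each member lies in $\mathcal{G}_\Gamma$; (4) prove the antichain property via the structural invariants above. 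The main obstacle I anticipate is step~(4): carefully ruling out \emph{all} ways a minor operation could accidentally produce another member of the family, particularly showing that no sequence of deletions/contractions on a larger $\widetilde{G}_m$ can reconstitute the rigid ``every short cycle is facial, every balanced cycle Hamiltonian'' structure on a smaller vertex set — one must track what happens to balanced cycles under contraction (they become handcuffs and leave $\mathcal{B}$) and argue that once the Hamiltonicity of balanced cycles is lost it cannot be recovered. Establishing a clean minor-monotone invariant (such as ``the number of balanced cycles, given that all are Hamiltonian'') that strictly decreases under every proper minor operation is the crux, and is where I would spend the most care.
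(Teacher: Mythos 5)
Your construction and overall plan coincide with the paper's: take the coloured planar graphs of Lemma \ref{coloured-planar}, identify colour classes, declare every face boundary (including the infinite face) balanced, and give an explicit $\Gamma$-labelling of the form $g_i^{-1}g_j$. However, two of your justifications are wrong as stated. First, the topological shortcut fails: the complex to which Theorem \ref{main1} applies is built on $\widetilde{G}_n$, not on the planar graph $G_n$; it is the quotient of a sphere obtained by identifying each colour class to a point, and its fundamental group is free of rank $|V(G_n)|-t$, not trivial. Worse, if that complex \emph{were} simply connected, Theorem \ref{main1} would force every cycle of $\widetilde{G}_n$ to be balanced, i.e.\ it would show the biased graph is \emph{not} group-labellable, since the $2$-cycles formed by parallel edges are unbalanced. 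Second, your explicit labelling assigns one group element to each of the $t$ colour classes; then any two parallel edges of $\widetilde{G}_n$ joining the same pair of colour classes receive the same label, so every $2$-cycle becomes balanced --- again wrong. You need one generator $g_i$ per vertex of the planar graph $G_n$, chosen greedily so that no product of distinct elements of $\{g_1^{\pm 1},\dots,g_n^{\pm 1}\}$ equals the identity, and the verification that $\mathcal{B}_\phi=\mathcal{B}_n$ then goes by decomposing a non-facial cycle of $\widetilde{G}_n$ into the disjoint paths it induces in $G_n$.

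The antichain step, which you correctly identify as the crux, is also left open, and the invariant you propose does not suffice. Contractions are a non-issue: all members have exactly $t$ vertices, so if one is a minor of another it is a deletion minor, $(\widetilde{G}_j,\mathcal{B}_j)\cong(\widetilde{G}_i,\mathcal{B}_i)\setminus R$ with $R\neq\emptyset$. But a global count of balanced cycles cannot finish the job: deleting $R$ kills at most $2|R|$ face boundaries, and the surviving count could coincidentally equal $|\mathcal{B}_j|$. What works is the local invariant that in each $(\widetilde{G}_k,\mathcal{B}_k)$ every edge lies on exactly two balanced cycles (every edge of a planar graph lies on exactly two faces, all of which are declared balanced), whereas after deleting a nonempty $R$ some surviving edge shares a face with a deleted edge and hence lies on at most one balanced cycle. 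This single observation replaces everything you list under step (4).
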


\begin{proof}
Apply Lemma \ref{coloured-planar} to choose an infinite family of $t$-coloured planar graphs $\{ G_1, G_2, \ldots \}$ each of which has a distinct number of edges.  Now for every $k$, let $\widetilde{G}_k$ be the graph obtained from $G_k$ by identifying each colour class to a single vertex.  Define 
$\mathcal{B}_k$ to be the set of cycles which are faces of the planar embedding of $G_k$.  

First we prove that every $(\widetilde{G}_k, \mathcal{B}_k)$ is in $\mathcal{G}_{\Gamma}$.  To this end, let $V(G) = \{v_1, \ldots, v_n\}$ and choose a sequence of group elements $g_1, \ldots, g_n$ with the property that each $g_i$ cannot be represented as a product of distinct elements from the set $\{ g_1, g_1^{-1}, \ldots, g_i, g_i^{-1} \}$ (in any order).  Now orient the edges of $G_k$ and thus $\widetilde{G}_k$ arbitrarily, and for every edge $e$ from $v_i$ to $v_j$ define $\phi(e) = g_i^{-1} g_j$.  We claim $\mathcal{B}_{\phi} = \mathcal{B}_k$.  To see this, let $C$ be an arbitrary cycle in $\widetilde{G}_k$.  If $C$ is also a cycle in $G_k$, then it must bound a face, so we have $C \in \mathcal{B}_k$ and, by our construction $C \in \mathcal{B}_{\phi}$.  Otherwise, the set of edges $E(C)$ forms a collection of paths in $G_k$, say $D_1, \ldots, D_r$.   Choose a closed walk $W$ around the cycle $C$ in $\widetilde{G}_k$ and assume that $W$ encounters each $D_i$ consecutively.  It follows from our construction that $\phi(W)$ may be expressed as a product of distinct group elements from $S = \{ g_i \mid \mbox{$v_i$ is an end of some $D_j$} \}$ together with $S^{-1}$.  It follows from our choice of group elements that this product is not the identity.  Hence $C \not\in \mathcal{B}_k$ and $C \not\in \mathcal{B}_{\phi}$ as desired.

It remains to prove that $\{ (\widetilde{G}_k, \mathcal{B}_k) \mid k \in \mathbb{N} \}$ is an antichain.  Suppose that $(\widetilde{G}_i, \mathcal{B}_i)$ contains a biased graph isomorphic to $(\widetilde{G}_j, \mathcal{B}_j)$ as a minor and $i \not= j$.  Since these graphs have the same number of vertices, it must be that 
$(\widetilde{G}_j, \mathcal{B}_j)$ is isomorphic to $(\widetilde{G}_i, \mathcal{B}_i) \setminus R$ for some nonempty set of edges $R$.  Choose an edge $e \in E(\widetilde{G}) \setminus R$ that lies on a common face with an edge in $R$.  Edge $e$ will be in at most one balanced cycle in $(\widetilde{G}_i, \mathcal{B}_i) \setminus R$, but every edge in $(\widetilde{G}_j, \mathcal{B}_j)$ is contained in exactly two balanced cycles, a contradiction.  
%This contradiction shows that $\{ (\widetilde{G}_k, \mathcal{B}_k) \mid k \in \mathbb{N} \}$ is an antichain, as desired.  
\end{proof}

\begin{lemma}
%For every finite group $\Gamma$ and every $t \ge 3$ there does not exist an infinite antichain in $\mathcal{G}_{\Gamma}$ of biased graphs on $t$ vertices.
Let $\Gamma$ be a finite group and $t \geq 3$.  There is no infinite antichain of $\Gamma$-labelled graphs on $t$ vertices.  
\end{lemma}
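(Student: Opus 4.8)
The plan is to exhibit an explicit well-quasi-ordering refining the biased-graph minor order on $\Gamma$-labelled graphs with a bounded number of vertices, using the fact that $\Gamma$ is finite to encode such graphs as tuples of natural numbers and then invoking Dickson's lemma. Fix a finite group $\Gamma$ and put $q = |\Gamma|$. Since there are only finitely many bijections from a $t$-element set to $\{1,\dots,t\}$, it is enough to consider $\Gamma$-labelled graphs whose vertex set is literally $\{1,\dots,t\}$. For each such biased graph I would choose a group labelling $\phi$ realising it, together with a fixed reference orientation of each edge (say $i\to j$ for a non-loop edge joining $i<j$, and an arbitrary orientation for each loop); the graph is then completely described by the data consisting, for each of the $\binom{t}{2}$ unordered pairs $\{i,j\}$ and each vertex $i$ (for the loops at $i$), of the multiset of elements of $\Gamma$ occurring as $\phi$-labels of the edges in that ``slot''. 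Recording each such multiset by its multiplicity function $\Gamma\to\mathbb{N}$ turns a $\Gamma$-labelled graph on $\{1,\dots,t\}$ (with its chosen labelling) into an element of $\mathbb{N}^{M}$, where $M = q\bigl(\binom{t}{2}+t\bigr)$.

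Next I would order $\mathbb{N}^{M}$ componentwise and recall that this is a well-quasi-order: $(\mathbb{N},\le)$ has no infinite antichain, and a finite product of well-quasi-orders is again a well-quasi-order (Dickson's lemma); concretely, every infinite sequence in $\mathbb{N}^{M}$ has two comparable terms. So, arguing by contradiction, suppose $(G_1,\mathcal{B}_1),(G_2,\mathcal{B}_2),\dots$ is an infinite antichain of $\Gamma$-labelled graphs on $t$ vertices. Relabelling vertices by $\{1,\dots,t\}$ and choosing a group labelling $\phi_k$ for each $(G_k,\mathcal{B}_k)$ as above yields an infinite sequence $x_1,x_2,\dots$ in $\mathbb{N}^{M}$, and hence there are indices $i\neq j$ with $x_i\le x_j$ componentwise.

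It then remains to translate $x_i\le x_j$ back into a minor relation. Componentwise domination says that in every slot the multiset of $\phi_i$-labels of $G_i$ embeds into the multiset of $\phi_j$-labels of $G_j$, so there is an injection $\iota\colon E(G_i)\to E(G_j)$ preserving endpoints, reference orientations, and $\phi$-labels. Let $(G_j',\mathcal{B}_j')$ be obtained from $(G_j,\mathcal{B}_j)$ by deleting every edge outside the image of $\iota$. Deleting an edge from a $\Gamma$-labelled graph is just restricting its group labelling, and biased-graph deletion is consistent with this, so $(G_j',\mathcal{B}_j')$ is precisely the $\Gamma$-labelled graph of $\phi_j$ restricted to $\iota(E(G_i))$. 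Since $\iota$ is an isomorphism of group-labelled graphs from $(G_i,\phi_i)$ onto that restriction, and whether a cycle is balanced depends only on the labels and orientations of its own edges, $\iota$ carries $\mathcal{B}_i$ onto $\mathcal{B}_j'$. Thus $(G_i,\mathcal{B}_i)$ is isomorphic to a minor of $(G_j,\mathcal{B}_j)$, contradicting the antichain hypothesis.

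I expect the only delicate point to be this last translation step — checking that the componentwise order on encodings genuinely implies the biased-graph minor order — and the clean way to handle it is to notice that one only ever needs deletions here, so contraction (and its subtleties for biased graphs) never enters, and that the balance of a cycle is an intrinsic property of its edge labels. Finally, it is worth remarking that the finiteness of $\Gamma$ is used in an essential way: it is exactly what makes the content of each slot an element of the well-quasi-ordered poset $\mathbb{N}^{q}$, whereas for infinite $\Gamma$ these multisets already form an infinite antichain (as in Lemma~\ref{inf-anti-bias}), so the encoding, and with it the argument, breaks down.
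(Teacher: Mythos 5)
Your proposal is correct and is essentially the paper's argument: the paper also encodes each labelled graph by the multiplicities of each group element on each vertex pair and extracts nested non-decreasing subsequences, which is precisely the proof of Dickson's lemma that you invoke directly. Your explicit verification that componentwise domination of the label multisets yields a deletion minor (and hence comparability in the biased-graph order) is a point the paper leaves implicit, but the route is the same.
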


\begin{proof}
Let $G_1, G_2, \ldots$ be an infinite sequence of graphs on the vertex set $\{1,2,\ldots, t\}$ and without loss of generality assume that every edge with ends $i,j$ with $i < j$ is oriented from $i$ to $j$.  For every $k$ let $\phi_k : E(G_k) \rightarrow \Gamma$ be a function.  Let $\Gamma = \{g_1, \ldots, g_{\ell} \}$ and proceed as follows.  For every $G_k$ consider the number of edges between $1$ and $2$ with label $g_1$.  This is an infinite sequence of nonnegative integers, so it has an infinite non-decreasing subsequence.  Now restrict the original sequence of graphs to the corresponding subsequence.  Continuing in this manner for each group element, and then repeating this process for every pair of vertices yields an infinite sequence of group-labelled graphs each contained in the next.  
\end{proof}

\noindent{\it Proof of Theorem \ref{inf-antichain-gamma}:} 
This is an immediate consequence of the previous two lemmas.

\bigskip

Turning our attention to matroids, Lemma \ref{lift-frame-unique} shows that the biased graphs used in the construction of Lemma \ref{inf-anti-bias} are both frame- and lift-unique provided they each have at least four edges between each pair of vertices.  This immediately gives the following corollary.

\begin{corollary} For every infinite group $\Gamma$ and every $t \ge 3$, there exist infinite antichains of rank $t$ matroids in both $\mathcal{L}_{\Gamma}$ and $\mathcal{F}_{\Gamma}$. 
\end{corollary}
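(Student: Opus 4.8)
The plan is to combine the infinite antichain of $\Gamma$-labelled graphs produced in Lemma~\ref{inf-anti-bias} with the representation-uniqueness supplied by Lemma~\ref{lift-frame-unique}. First I would re-run the construction of Lemma~\ref{inf-anti-bias}, but invoking Lemma~\ref{coloured-planar} with the parameter $\ell$ taken to be at least $4$ (and $t$ fixed), so that the resulting biased graphs $\{(\widetilde{G}_k, \mathcal{B}_k) \mid k \in \mathbb{N}\}$ form an infinite antichain of $\Gamma$-labelled graphs on $t$ vertices in which every pair of vertices is joined by at least four edges. Since the colourings used are proper, each $\widetilde{G}_k$ is loopless; and since every balanced cycle corresponds to a face of size $t \ge 3$, every $2$-cycle of $\widetilde{G}_k$ is unbalanced. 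Thus each $(\widetilde{G}_k, \mathcal{B}_k)$ satisfies the hypotheses of Lemma~\ref{lift-frame-unique} and is therefore both frame-unique and lift-unique.

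Next I would pass to the associated matroids. Set $M_k = F(\widetilde{G}_k, \mathcal{B}_k)$ and, separately, $N_k = L(\widetilde{G}_k, \mathcal{B}_k)$. Each $M_k$ has rank $|V(\widetilde{G}_k)| = t$, and since the graphs $G_k$ were chosen with pairwise distinct numbers of edges, the $M_k$ have pairwise distinct ground-set sizes. It remains to show $\{M_k\}$ is an antichain under the minor order. Suppose for a contradiction that $M_j$ is isomorphic to a minor of $M_i$ for some $i \ne j$. Because $\rank(M_i) = \rank(M_j) = t$ and $(\widetilde{G}_i, \mathcal{B}_i)$ is loopless (so no element of $M_i$ is a matroid loop), any contraction strictly decreases the rank; hence this minor must be obtained from $M_i$ by deletions only, say $M_i \setminus R \cong M_j$ for some $R \subseteq E(\widetilde{G}_i)$.

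Now I would use that biased-graph minor operations are consistent with matroid minor operations, so $M_i \setminus R = F\bigl((\widetilde{G}_i, \mathcal{B}_i) \setminus R\bigr)$. Hence $(\widetilde{G}_i, \mathcal{B}_i) \setminus R$ is a biased graph whose frame matroid is isomorphic to $M_j = F(\widetilde{G}_j, \mathcal{B}_j)$. Since $(\widetilde{G}_j, \mathcal{B}_j)$ is frame-unique, $(\widetilde{G}_i, \mathcal{B}_i) \setminus R$ must be obtained from $(\widetilde{G}_j, \mathcal{B}_j)$ by renaming vertices, so $(\widetilde{G}_j, \mathcal{B}_j)$ is isomorphic to a minor of $(\widetilde{G}_i, \mathcal{B}_i)$ --- contradicting that $\{(\widetilde{G}_k, \mathcal{B}_k)\}$ is an antichain of biased graphs. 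Thus $\{M_k\}$ is an infinite antichain of rank-$t$ matroids in $\mathcal{F}_{\Gamma}$; the identical argument with $L$, $N_k$, and lift-uniqueness in place of $F$, $M_k$, and frame-uniqueness handles $\mathcal{L}_{\Gamma}$. The only real subtlety --- the reason the corollary is not completely immediate from Lemma~\ref{inf-anti-bias} alone --- is that a matroid minor need not a priori arise from a biased-graph minor; the rank bound (to exclude contractions) together with Lemma~\ref{lift-frame-unique} (to pin down the biased-graph representation of the deletion-minor) is exactly what closes that gap.
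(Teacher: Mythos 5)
Your proposal is correct and follows the same route the paper intends: combine the antichain of $\Gamma$-labelled graphs from Lemma~\ref{inf-anti-bias} (built with $\ell \ge 4$ so that Lemma~\ref{lift-frame-unique} applies) with frame- and lift-uniqueness to transfer the antichain property from biased graphs to their matroids. The paper states this as an immediate consequence; your write-up simply makes explicit the step it leaves implicit, namely that equal rank plus looplessness forces any putative minor to be a deletion minor, which uniqueness then pulls back to a biased-graph minor, contradicting Lemma~\ref{inf-anti-bias}.
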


\bibliography{Matroids1} 

\end{document}